\definecolor{marin}{rgb}   {0.,   0.3,   0.7}
\definecolor{rouge}{rgb}   {0.8,   0.,   0.}
\definecolor{sepia}{rgb}   {0.8,   0.5,   0.}
\newcommand{\C}{\mathbb{C}}
\newcommand{\e}{\ensuremath{\mathrm{e}}}
\newcommand{\ts}{h}
\newtheorem{lemma}{Lemma}[section]
\newtheorem{theorem}[lemma]{Theorem}
\newtheorem{proposition}[lemma]{Proposition}
\newtheorem{corollary}[lemma]{Corollary}
\numberwithin{equation}{section}
\newcommand{\QED}{\mbox{}\hfill \raisebox{-0.2pt}{\rule{5.6pt}{6pt}\rule{0pt}{0pt}}
          \medskip\par}
\newenvironment{proof}{\noindent{\bf Proof:}}{$\Box$}
\title{Symmetric-conjugate splitting methods for linear unitary problems} 
\author{J. Bernier$^{1}$, S. Blanes$^{2}$, F. Casas$^{3}$\footnote{Corresponding author}, A. Escorihuela-Tom\`as$^{4}$ \\[2ex]
$^{1}$ {\small\it Nantes Universit\'e, CNRS, Laboratoire de Math\'ematiques Jean Leray, LMJL, F-44000 Nantes, France}\\{
\small\it email: joackim.bernier@univ-nantes.fr}\\[1ex]
$^{2}$ {\small\it Universitat Polit\`ecnica de Val\`encia, Instituto de Matem\'atica Multidisciplinar, 46022-Valencia, Spain}\\{
\small\it email: serblaza@imm.upv.es}\\[1ex]
$^{3}$ {\small\it Departament de Matem\`atiques and IMAC, Universitat Jaume I, 12071-Castell\'on, Spain}\\{
\small\it email: Fernando.Casas@mat.uji.es}\\[1ex]
$^{4}$ {\small\it Departament de Matem\`atiques, Universitat Jaume I, 12071-Castell\'on, Spain}\\{
\small\it email: alescori@uji.es}\\[1ex]
}
\begin{document}
\mathsurround 0.8mm

\maketitle

\begin{abstract}

We analyze the preservation properties of a family of reversible splitting methods when they are applied to
the numerical time integration of linear differential equations defined in the unitary group. The schemes involve
complex coefficients and are conjugated to unitary transformations for sufficiently small values of the time step-size.
New and efficient methods up to order six are constructed and tested on the linear Schr\"odinger equation.

\

\

\noindent
{\bf Keywords}: Splitting methods, complex coefficients, unitary problems
\\[1ex]
{\bf MSC numbers}: 65L05, 65L20, 65M70
\end{abstract}


\section{Introduction}


We are concerned in this work with the numerical integration of the linear ordinary differential equation 
\begin{equation}  \label{problem1}
  i \frac{d u}{dt} + H u = 0, \qquad\quad u(0) = u_0,
\end{equation}
where $u \in \mathbb{C}^N$ and $H \in \mathbb{R}^{N \times N}$ is a real matrix.  A particular example of paramount importance leading to eq.
(\ref{problem1}) is the time-dependent
Schr\"odinger equation once it is discretized in space. In that case $H$ (related to the Hamiltonian of the system) can be typically split into two parts,
$H = A + B$. The equation
\[
  y^{\prime\prime} + K y = 0
\]
with $y \in \mathbb{R}^d$,   $K \in \mathbb{R}^{d \times d}$ can also be recast in the form (\ref{problem1}) if the matrix $K$ satisfy certain conditions \cite{blanes08otl}.


Although the solution of (\ref{problem1}) is given by $u(t) = \e^{i t H} u_0$, very often the dimension of $H$ is so large that evaluating directly the action of
the matrix exponential on $u_0$
is  computationally very expensive, and so other approximation techniques are desirable. When $H = A + B$ and  $\e^{i t A} u_0$, $\e^{i t B} u_0$ 
can be efficiently evaluated,
then splitting methods constitute a natural option \cite{mclachlan02sm}. They are of the form
\begin{equation} \label{split1}
  S_h = \e^{i h a_0 A} \, \e^{i h b_0 B} \, \cdots  \, \e^{i h b_{2n-1} B} \, \e^{i h a_{2n} A} 
\end{equation}
for a time step $h$. Here $a_j$, $b_j$ are coefficients chosen in such a way that $S_h = \e^{i h H} + \mathcal{O}(h^{p+1})$ when $h \rightarrow 0$
for a given $p \ge 1$. After applying the Baker--Campbell--Hausdorff (BCH) formula, $S_h$ can be formally expressed as
$S_h=\exp\left(i h H_h \right)$, with $iH_h=i H_h^o+H_h^e$ and
\begin{eqnarray*}
 H^o_h & = & (g_{1,1}A+g_{1,2}B)+h^2(g_{3,1}[A,[A,B]]+g_{3,2}[B,[A,B]])+\ldots \\
 H^e_h & = & h g_{2,1}[A,B]+h^3 (g_{4,1}[A,[A,[A,B]]]+\ldots)+\ldots
\end{eqnarray*} 
Here $[A,B] := AB-BA$, $g_{k,j}$ are polynomials of degree $k$ in the coefficients $a_i,b_i$ verifying $g_{1,1}=g_{1,2}=1$ (for consistency), and $g_{k,j}=0, \ k=1,2,\ldots,p, \ \forall j$ 
for achieving order $p$.

If $A$ and $B$ are real symmetric matrices, then $[A,B]$ is skew-symmetric and $[A,[A,B]]$ is symmetric. In general, all nested commutators with an even number of matrices $A,B$ are skew-symmetric and those containing an odd number are symmetric, so that $(H_h^o)^T=H_h^o$ and  $(H_h^e)^T=-H_h^e$. 

When the coefficients $a_j, b_j$ are real, then $g_{k,j}$ are also real and therefore $S_h=\e^{ih H_h}$ is a unitary matrix. In addition, if the composition (\ref{split1}) is
palindromic, i.e., $a_{2n-j} = a_j$, $b_{2n-1-j} = b_j$, $j=1,2,\ldots$, then $g_{2k,j}=0$ and $H_{-h}=H_h$, thus leading to a time-reversible method, 
 $S_{-h}=S_h^{-1}$. In other words, if $u_n$ denotes the approximation at time $t = n h$, then $S_{-h}(u_{n+1}) = u_n$. 
 As a result, one gets a very favorable long-time behavior of the error for this type of integrators \cite{lubich08fqt}.
Thus, in particular, 
\[
    \mathcal{M}(u) := |u|^2  \qquad\qquad \mbox{ (norm) }
\]    
and 
\[
   \mathcal{H}(u) := \bar{u}^T H u \qquad\qquad \mbox{(expected value of the energy)}
\]   
are almost globally preserved. 

Recently, some preliminary results obtained with a different class of splitting methods (\ref{split1}) have been reported when they are applied to the semi-discretized 
Schr\"odinger equation  \cite{blanes22asm}. These schemes are characterized by the fact that the coefficients in (\ref{split1}) are \emph{complex numbers}.
Notice, however, that in this case the polynomials $g_{k,j}\in \C$, so that $S_h=\e^{ih H_h}$ is \emph{not} unitary in general. This is so even for palindromic
compositions, since $g_{2\ell + 1,j}$ are complex anyway.

There is nevertheless a special symmetry in the coefficients, namely
\begin{equation} \label{sc1}
  a_{2n-j} = \overline{a}_j \qquad \mbox{ and } \qquad b_{2n-1-j} = \overline{b}_j, \qquad j=1,2,\ldots,
\end{equation}
worth to be considered. Methods of this class can be properly called \emph{symmetric-conjugate} compositions. In that case, a 
straightforward computation shows that the resulting composition satisfies 
\begin{equation} \label{sc2}
  \overline{S}_h = S_h^{-1}
\end{equation}
for real matrices $A$ and $B$, and in addition
\begin{equation} \label{sc3}
 (\overline{S}_h)^T  = S_{-h}
\end{equation}
if $A$ and $B$ are real symmetric. In consequence, 
\[
  iH_h = i(H+\hat H_h^o) +  i \hat H_h^e
\] 
for certain real matrices $\hat H_h^o$ (symmetric), and $\hat H_h^e$ (skew-symmetric). Since $i \hat H_h^e$ is not real, then unitarity is lost. 
 In spite of that, the
examples collected in \cite{blanes22asm} seem to indicate that this class of schemes behave 
as compositions with real coefficients regarding preservation properties, at least for sufficiently small values of $h$. 
Intuitively, this can be traced back to the fact that $i \hat H_h^e= {\cal O}(h^{p})$ and is purely imaginary.

One of the purposes of 
this paper is to provide a rigorous justification of this behavior by generalizing the treatment done in \cite{blanes22asm} for the problem (\ref{problem1})
defined in the group SU(2), i.e., when $H$ is a linear combination of Pauli matrices. In particular, we prove here
that, typically, \emph{any consistent symmetric-conjugate splitting method applied to (\ref{problem1}) when $H$ is real symmetric, is
conjugated to a unitary method for sufficiently small values of $h$}. In fact, this property can be related to the reversibility of the map $S_h$ with respect to complex conjugation, as specified next.

Let $C$ be the linear transformation defined by $C(u) = \overline{u}$ for all $u \in \mathbb{C}^N$. Then, the differential equation (\ref{problem1}) is $C$-reversible,
in the sense that $C(i H u) = -i H(C(u))$ \cite[section V.1]{hairer06gni}. Moreover, since (\ref{sc2}) holds, then $C \circ S_h = S_h^{-1} \circ C$. In other words, the map
$S_h(u)$ is $C$-reversible \cite{hairer06gni} (or reversible for short). Notice that this also holds for palindromic compositions (\ref{split1}) with real coefficients.

In the sequel we will refer to compositions verifying (\ref{sc1}) as symmetric-conjugate or reversible methods.

Splitting and composition methods with complex coefficients have also interesting properties concerning the magnitude of the successive terms in the asymptotic expansion of the
local truncation error. Contrarily to methods with real coefficients, higher order error terms in the expansion of a given method have essentially a similar size as lower order terms
\cite{blanes22osc}. In addition, an integrator of a given order with the minimum number of flows typically achieves a good efficiency, whereas with real coefficients one has to
introduce additional parameters (and therefore more flows in the composition) for optimization purposes. It makes sense, then, to apply this class of schemes to equation (\ref{problem1}) and eventually compare their performance with splitting methods involving real coefficients, 
since in any case the presence of complex coefficients does not lead to an increment in the overall computational cost. 

The structure of the paper goes as follows. In section \ref{section2} we provide further experimental evidence of the preservation properties exhibited by $C$-reversible
splitting methods applied to different classes of matrices $H$ by considering several illustrative numerical examples. In section \ref{section3} we analyze in detail 
this type of methods and validate theoretically the observed results by stating two theorems concerning consistent reversible maps. Then, in section \ref{section4} we present
new symmetric-conjugate schemes up to order 6 specifically designed for the semi-discretized Schr\"odinger equation and other problems with the same
algebraic structure. Finally, these new methods are tested in section
\ref{section5} for a specific potential.

\section{Symmetric-conjugate splitting methods in practice: some illustrative examples}
\label{section2}

To illustrate the preservation properties exhibited by symmetric-conjugate (or reversible)
methods when applied to (\ref{problem1}) with $H = A + B$, we consider some low order compositions of this type. Specifically, the tests will be carried out with the following schemes:

\paragraph{Order 3.} The simplest symmetric-conjugate method corresponds to
\begin{equation} \label{split31}
   S_h^{[3,1]} = \e^{i h \overline{b}_0 B} \, \e^{i h \overline{a}_1 A} \, \e^{i h b_1 B} \, \e^{i h a_1 A} \,  \e^{i h b_0 B},
\end{equation}
with $a_1 = \frac{1}{2} + i \frac{\sqrt{3}}{6}$, $b_0 = \frac{a_1}{2}$, $b_1 = \frac{1}{2}$ and was first obtained in \cite{bandrauk91ies}. 
In addition, and as a representative of the schemes considered in section \ref{section4}, we
also use the following method,  with $a_j > 0$ and $b_j \in \mathbb{C}$, $\Re(b_j) > 0$:
\begin{equation} \label{split32}
   S_h^{[3,2]} = \e^{i h \overline{b}_0 B} \, \e^{i h a_1 A} \, \e^{i h \overline{b}_1 B} \, \e^{i h a_2 A} \,  \e^{i h b_1 B} \, \e^{i h a_1 A} \, \e^{i h b_0 B},
\end{equation}
where
\[
  a_1 = \frac{3}{10}, \quad a_2 = \frac{2}{5}, \quad b_0 = \frac{13}{126} - i \frac{\sqrt{59/2}}{63}, \quad b_1 = \frac{25}{63} + i \frac{5 \sqrt{59/2}}{126}.
\]

\paragraph{Order 4.}  The scheme has the same exponentials as (\ref{split32}),
\begin{equation} \label{split41}
   S_h^{[4]} = \e^{i h \overline{b}_0 B} \, \e^{i h \overline{a}_1 A} \, \e^{i h \overline{b}_1 B} \, \e^{i h a_2 A} \,  \e^{i h b_1 B} \, \e^{i h a_1 A} \, \e^{i h b_0 B},
\end{equation}
but now 
\[
  a_1 = \frac{1}{12} (3 + i \sqrt{15}), \quad a_2 = \frac{1}{2}, \quad b_0 = \frac{a_1}{2}, \quad b_1 = \frac{1}{24} (9 + i \sqrt{15}).
\]  

When the matrix $H$ results from a space discretization of the time-dependent Schr\"odinger equation (for instance, by means of a pseudo-spectral method), 
then it is real 
symmetric and $A$, $B$ are also symmetric (in fact, $B$ is diagonal). It makes sense, then, start analyzing this situation, where, in addition, 
\emph{all the eigenvalues of $H$ are simple}.
To proceed, we generate a $N \times N$ real matrix with $N=10$ and uniformly distributed elements in the interval $(0,1)$, and take $H$ as its symmetric part. The symmetric matrix $A$ is generated analogously, and finally we fix $B = H - A$. Next we compute the approximations obtained by 
$S_h^{[3,1]}$, $S_h^{[3,2]}$ and $S_h^{[4]}$ for different values of $h$, 
determine their eigenvalues $\omega_j$ and compute the quantity 
\[
    D_h = \max_{1 \le j \le N} (\big| |\omega_j|-1 \big|)
\]
for each $h$. Finally, we depict $D_h$ as a function of $h$.

Figure \ref{figure1} (left) is representative of the results obtained in all cases we have tested: all $|\omega_j|$ are 1 (except round-off)
for some interval $0 < h < h^*$, and then there is always some
$\omega_{\ell}$ such that $|\omega_{\ell}| > 1$. In other words, $S_h^{[3,1]}$, $S_h^{[3,2]}$  
and $S_h^{[4]}$ behave as unitary maps in this interval. This is precisely what happens in the group
SU(2), as shown in \cite{blanes22asm}. 

The right panel of Figure \ref{figure1} is obtained in the same situation (i.e., $H$ real symmetric with simple eigenvalues), but now both $A$ and $B$ are no longer symmetric: essentially the same behavior as before is observed. Of course, when $h < h^*$, both the norm of $u$, $\mathcal{M}(u)$, and the expected value of the energy, $\mathcal{H}(u)$
are preserved for long times, as shown in \cite{blanes22asm}.

\begin{figure}[!ht] 
\centering
  \includegraphics[width=.49\textwidth]{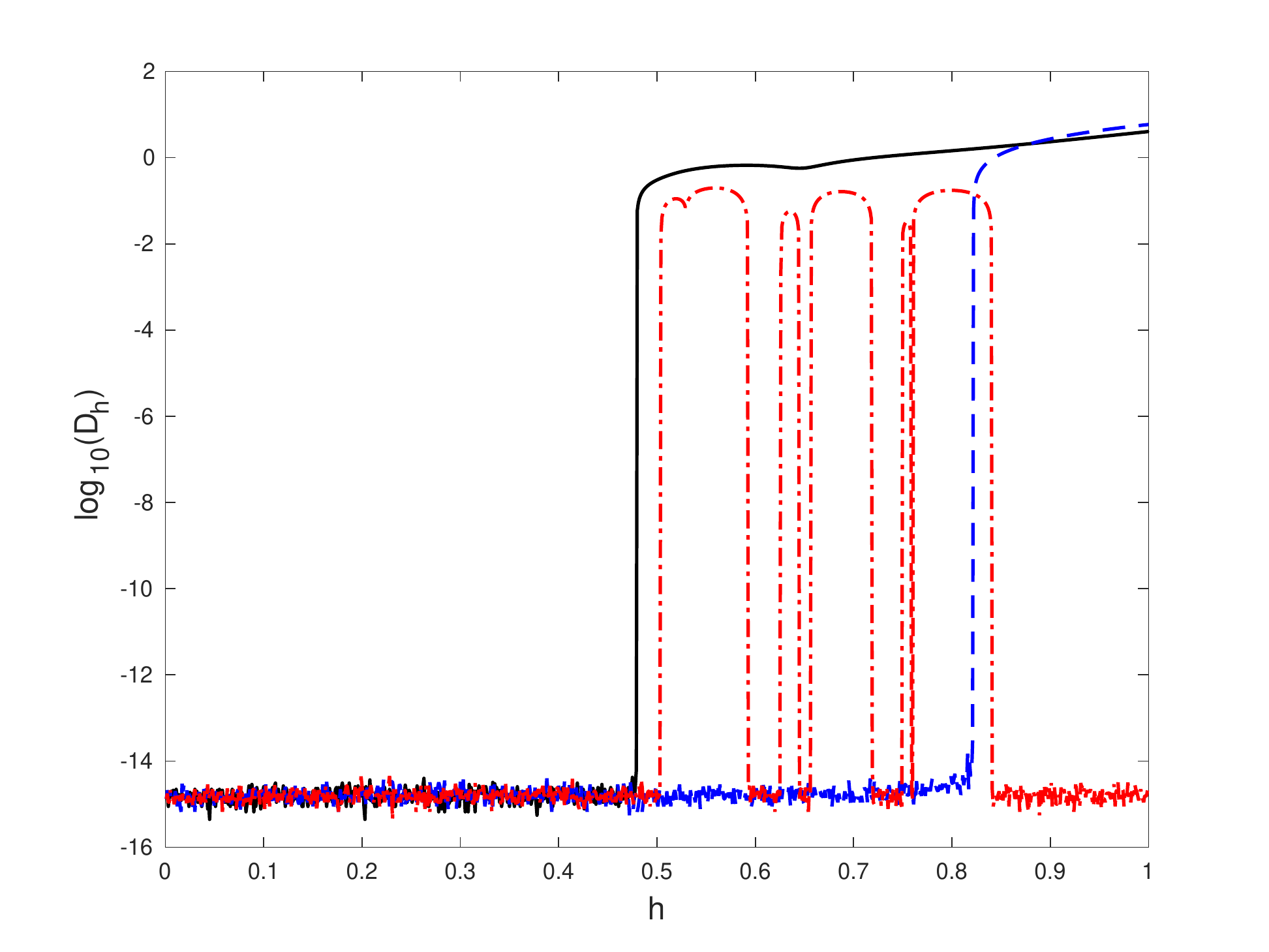}
  \includegraphics[width=.49\textwidth]{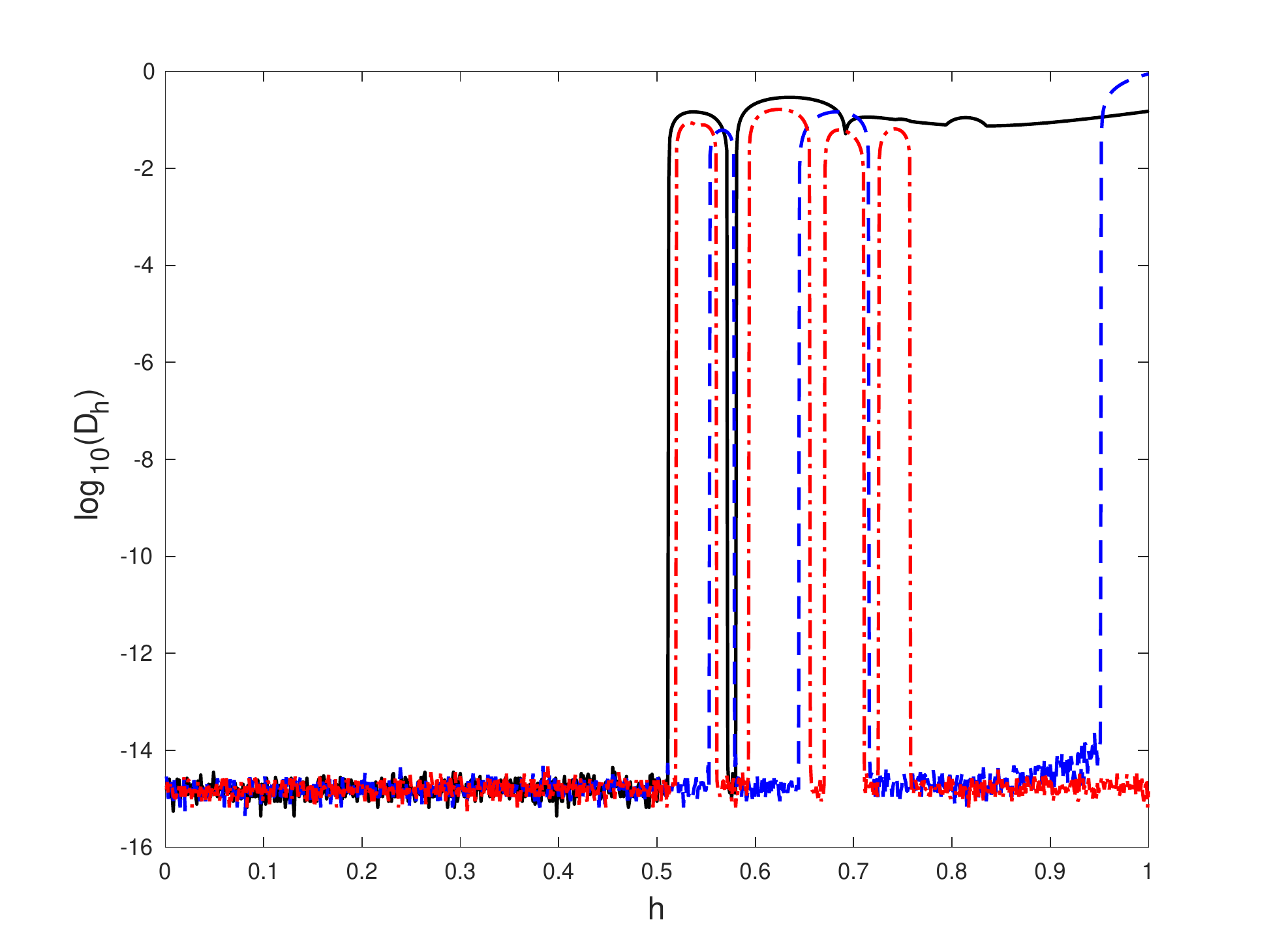}
\caption{\label{figure1} \small Absolute value of the largest eigenvalue of the approximations $S_h^{[3,1]}$ (black solid line), 
$S_h^{[3,2]}$ (red dash-dotted line) and $S_h^{[4]}$ (blue dashed line)
for different values of $h$ when
$H=A+B$ is a real symmetric matrix with simple eigenvalues. Left: $A$ and $B$ are also real symmetric. Right: $A$ and $B$ are real, but not symmetric.
}
\end{figure}

Our next simulation concerns a real (but not symmetric) matrix $H$ with all its eigenvalues \emph{real and simple}. Again, there exists a threshold $h^* > 0$ such that for 
$h < h^*$ the schemes
render unitary approximations. This is clearly visible in Figure \ref{figure2} (left panel). If we consider instead a completely 
arbitrary real matrix $H$, then the outcome is rather different:
$D_h > 0$ for any $h>0$ (right panel; for this example $D_h = 9.79 \cdot 10^{-4}$ already for $h=0.001$). 

\begin{figure}[!ht] 
\centering
  \includegraphics[width=.49\textwidth]{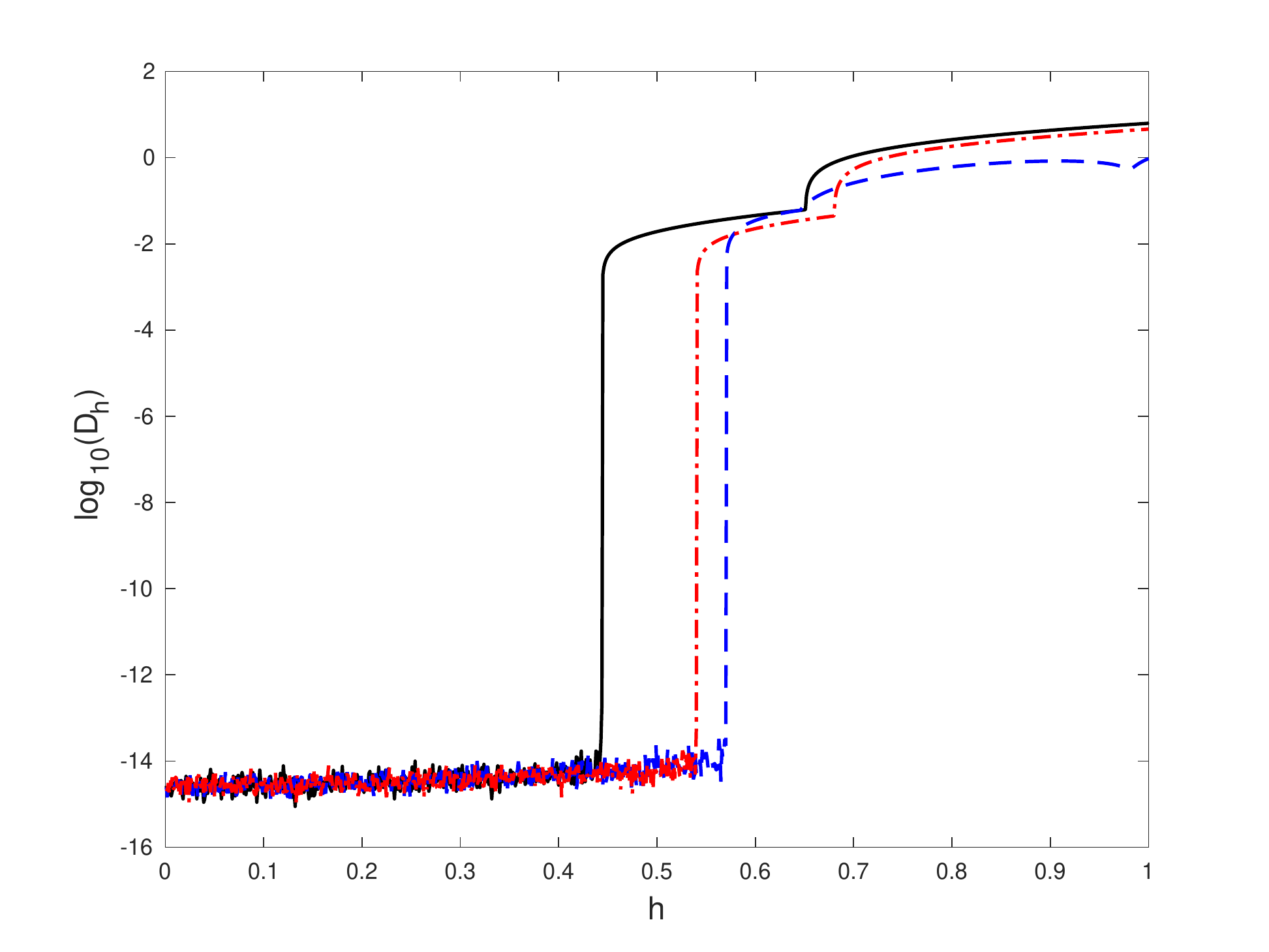}
  \includegraphics[width=.49\textwidth]{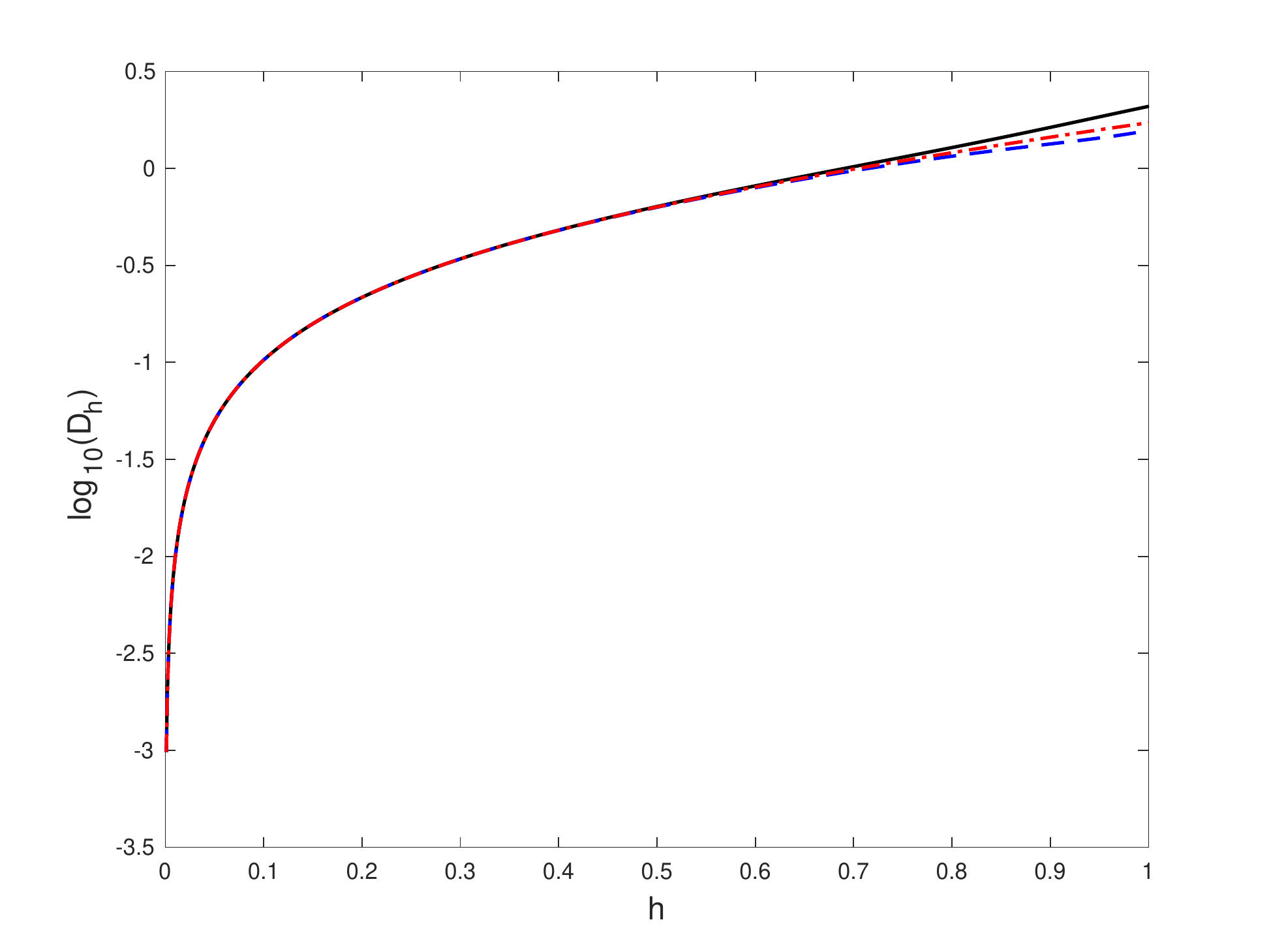}
\caption{\label{figure2} \small Same as Figure \ref{figure1} when
$H=A+B$ is a real (but not symmetric) matrix. Left: the eigenvalues of $H$ are real and simple. Right: the eigenvalues of $H$ are arbitrary.
}
\end{figure}

Next we illustrate the situation when \emph{the real matrix $H$ has multiple eigenvalues but is still diagonalizable}. As before, we consider first the analogue of Figure \ref{figure1}, namely:
$H$ is symmetric, with $A$ and $B$ symmetric matrices (Figure \ref{figure3}, left panel) and $A$ and $B$ are real, but not symmetric (right panel). In the first case we notice that,
whereas all the eigenvalues of the approximations rendered by $S_h^{[3,1]}$ and $S_h^{[4]}$ still have absolute value 1 for some interval $0 < h < h^*$, this is 
clearly not the case of $S_h^{[3,2]}$. If, on the other hand, the splitting is done is such a way that $A$ and $B$ are not symmetric (but still real), 
then $D_h > 0$ even for very small values of $h$. The same behavior is observed when $H$ is taken as a real (but not symmetric), diagonalizable matrix with multiple real eigenvalues.

\begin{figure}[!ht] 
\centering
  \includegraphics[width=.49\textwidth]{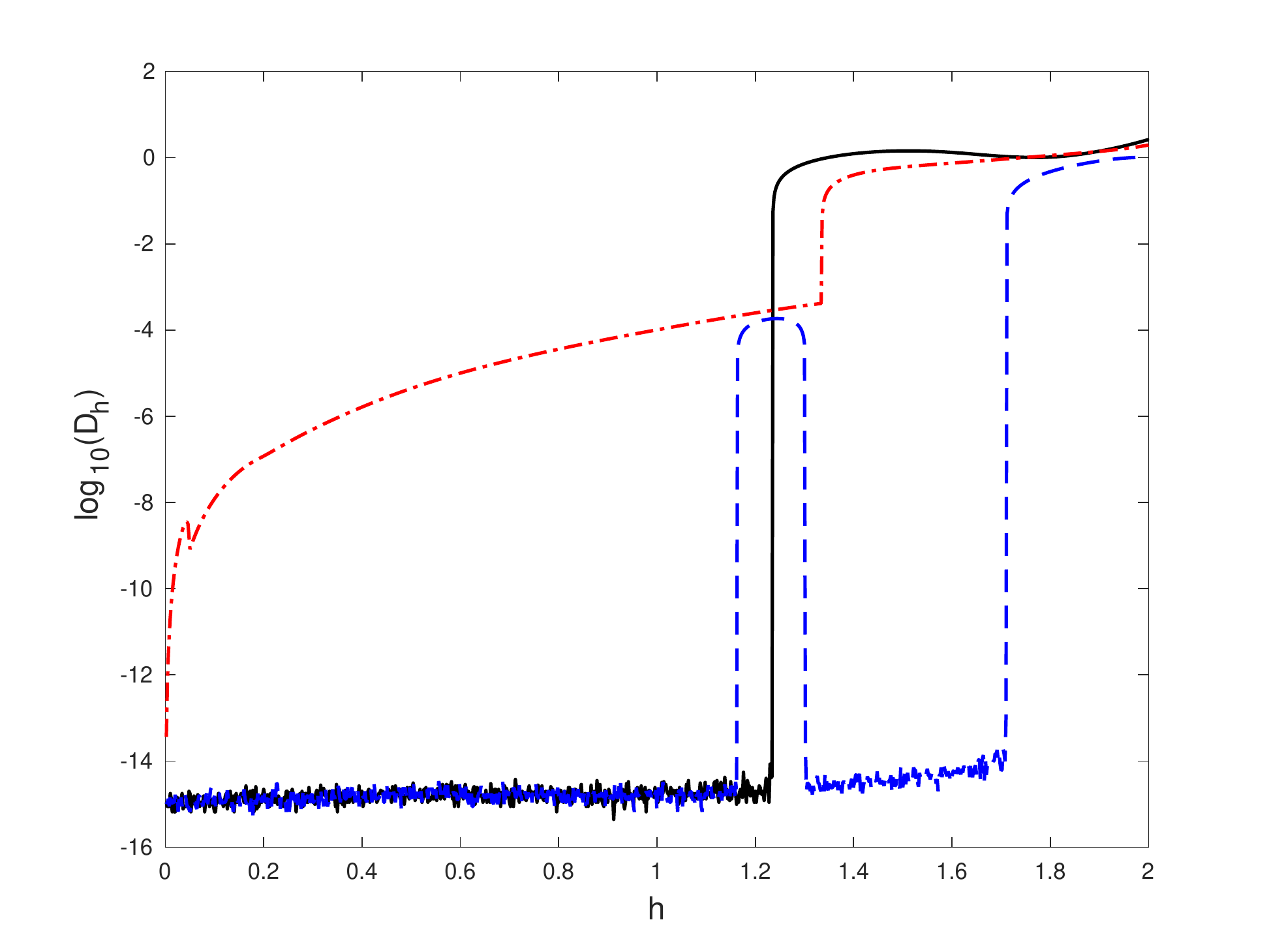}
  \includegraphics[width=.49\textwidth]{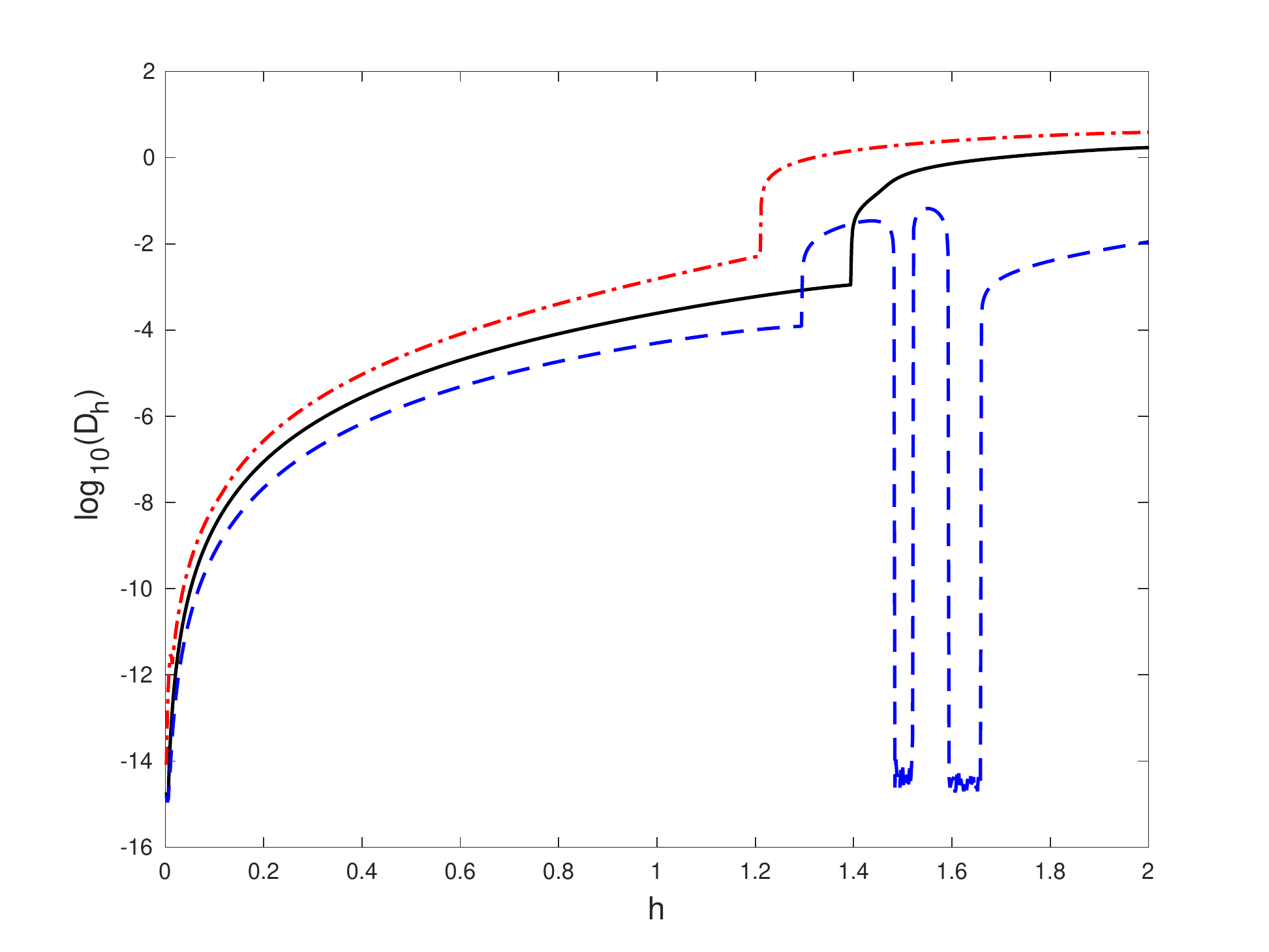}
\caption{\label{figure3} \small Same as Figure \ref{figure1} when
$H=A+B$ is a real symmetric matrix with multiple eigenvalues. Left: $A$ and $B$ are real symmetric matrices. Right: $A$ and $B$ are real, but
not symmetric.
}
\end{figure}

The different phenomena exhibited by these examples require then a detailed numerical analysis of the class of schemes involved, trying to explain in particular
the role played by the
eigenvalues of the matrix $H$ in the final outcome, as well as the different behavior of $S_h^{[3,1]}$ and $S_h^{[3,2]}$. This will be the subject of the
next section.

\section{Numerical analysis of reversible integration schemes}
\label{section3}

\subsection{Main results}
\label{sub3.1}

We next state two theorems and two additional corollaries that, generally speaking, justify the previous experiments and explain the good behavior exhibited by
reversible methods.

\begin{theorem}
\label{thm:simple}
Let  $H \in \mathbb{R}^{N\times N}$ be a real matrix and let $S_\ts \in \mathbb{C}^{N\times N}$ be a family of complex matrices depending smoothly on $\ts\in \mathbb{R}$ such that
\begin{itemize}
\item $S_\ts$ is a reversible map in the previous sense, so that
$$
 \overline{S}_\ts = S_\ts^{-1}; 
$$
\item $S_\ts$ is consistent with $\exp(i\ts H)$, i.e. there exists $p\geq 1$ such that
\begin{equation}
\label{eq:consist}
S_\ts \mathop{=}_{\ts\to 0} \e^{i\ts H} + \mathcal{O}(\ts^{p+1});
\end{equation}
\item the eigenvalues of $H$ are real and simple.
\end{itemize}
Then there exist 
\begin{itemize}
\item $D_\ts$,  a family of real diagonal matrices depending smoothly on $\ts$,
\item $P_\ts$, a family of real invertible matrices depending smoothly on $\ts$,
\end{itemize}
such that $P_\ts = P_0 + \mathcal{O}(\ts^p)$, $D_\ts = D_0 + \mathcal{O}(\ts^p)$  and, provided that $|\ts|$ is small enough, 
\begin{equation}
\label{eq:what_we_want}
S_\ts = P_\ts \, \e^{i\ts D_\ts} \, P_\ts^{-1}.
\end{equation}
\end{theorem}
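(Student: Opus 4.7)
The plan is to pass from $S_\ts$ to its principal matrix logarithm, use reversibility to show that this logarithm is purely imaginary, and then rely on the smooth diagonalizability of a real matrix with simple real eigenvalues. The main obstacle is that the spectrum of $S_\ts$ degenerates at $\ts=0$ (all eigenvalues collapse to $1$), so one cannot diagonalize $S_\ts$ directly in a smooth way near $\ts=0$; the logarithm trick bypasses this by converting the problem into the smooth diagonalization of a matrix close to $H$.

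For $|\ts|$ small enough, $S_\ts$ stays close to $S_0 = I$, so the principal matrix logarithm $\log S_\ts$ is well defined and depends smoothly on $\ts$. I set $K_\ts := -i\log S_\ts$, so that $S_\ts = \e^{iK_\ts}$ and $K_0 = 0$. The reversibility $\overline{S_\ts} = S_\ts^{-1}$ yields $\e^{-i\overline{K_\ts}} = \e^{-iK_\ts}$; since the matrix exponential is injective on a neighbourhood of $0$, this forces $\overline{K_\ts} = K_\ts$, so $K_\ts$ is real. Because $K_\ts$ is smooth in $\ts$ and vanishes at $\ts=0$, Hadamard's lemma produces a smooth real family
\[
 \tilde H_\ts := K_\ts / \ts,
\]
with $\tilde H_0 = H$ (using that the Fr\'echet derivative of $\log$ at $I$ is the identity and $S_\ts = I + i\ts H + \mathcal{O}(\ts^2)$). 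Combining the smoothness of $\log$ near $I$ with the consistency estimate $S_\ts = \e^{i\ts H} + \mathcal{O}(\ts^{p+1})$ gives $\log S_\ts = i\ts H + \mathcal{O}(\ts^{p+1})$, hence $\tilde H_\ts = H + \mathcal{O}(\ts^{p})$.

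Now $\tilde H_0 = H$ is a real matrix with simple real eigenvalues and $\tilde H_\ts$ is a smooth real perturbation. For $|\ts|$ small its eigenvalues therefore stay simple; they also stay real, since complex eigenvalues of a real matrix would come in conjugate pairs and cannot appear without a collision. Standard analytic perturbation theory then yields smooth real eigenvalues $d_j(\ts)$ together with smooth one-dimensional real eigenspaces, which I gather column by column into a real invertible $P_\ts$ and a real diagonal $D_\ts$ satisfying $\tilde H_\ts = P_\ts D_\ts P_\ts^{-1}$. The estimates $P_\ts = P_0 + \mathcal{O}(\ts^p)$ and $D_\ts = D_0 + \mathcal{O}(\ts^p)$ follow from $\tilde H_\ts - H = \mathcal{O}(\ts^p)$ via standard perturbation bounds for simple eigenvalues and their spectral projectors. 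The conclusion is then immediate:
\[
 S_\ts = \e^{iK_\ts} = \e^{i\ts \tilde H_\ts} = P_\ts\, \e^{i\ts D_\ts}\, P_\ts^{-1}.
\]
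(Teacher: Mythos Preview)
Your argument is correct. Both you and the paper begin the same way---taking the logarithm to write $S_h=\e^{ih\tilde H_h}$ with $\tilde H_h$ real and $\tilde H_h=H+\mathcal{O}(h^p)$---but then diverge. You appeal directly to classical spectral perturbation theory: simple real eigenvalues of a real matrix persist as simple real eigenvalues under real perturbation, and the associated eigenprojectors (hence a basis of real eigenvectors) depend smoothly on the matrix, which yields $P_h$, $D_h$ and the $\mathcal{O}(h^p)$ estimates at once. The paper instead uses a Lie-algebraic normal-form argument: it constructs, via the implicit function theorem, a real family $\chi_h$ with $[\e^{h^p\chi_h}\tilde H_h\,\e^{-h^p\chi_h},H]=0$, so that the conjugated matrix is automatically diagonal once $H$ is. Your route is arguably more elementary for this statement and is essentially the ``intuitive'' argument the authors sketch in their remark; what the paper's construction buys is that it extends cleanly to the case of multiple eigenvalues (their Theorem~3.3), where one iterates the same conjugation trick on each eigenspace block, whereas naive eigenvector perturbation theory breaks down there.
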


\begin{corollary} 
\label{cor:simple} 
In the setting of Theorem \ref{thm:simple}, there exists a constant $C>0$ such that, provided that $|\ts|$ is small enough,  for all $u\in \mathbb{C}^N$ and all 
eigenvalues $\omega \in \sigma(H)$, one has
\begin{equation} \label{eq:cor_est_eigs}
 \sup_{n\geq 0} \, \Big||\Pi_\omega S_\ts^{n} u| - |\Pi_\omega  u| \Big| \leq C |\ts|^p |u|,
\end{equation}
where $\Pi_\omega$ denotes the spectral projector onto $\mathrm{Ker}(H-\omega I_N)$. Moreover, if $H$ is symmetric, the norm and the energy are almost conserved, 
in the sense that, for all $u\in \mathbb{C}^N$, it holds that 
\begin{equation} \label{eq:cor_mass_energ}
  \sup_{n \in \mathbb{Z}} \,  \big| \mathcal{M}(S_\ts^n u) - \mathcal{M}( u)  \big|\leq C |\ts|^p |u|^2  \qquad \mathrm{ and } \qquad 
  \sup_{n \in \mathbb{Z}} \, \big| \mathcal{H}(S_\ts^n u) - \mathcal{H}( u)  \big| \leq C |\ts|^p |u|^2, 
\end{equation}
where $\mathcal{M}( u) = |u|^2 $ and $\mathcal{H}( u) = \overline{u}^T H u $.
\end{corollary}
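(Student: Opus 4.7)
The plan is to exploit the factorization $S_\ts = P_\ts \, \e^{i\ts D_\ts} \, P_\ts^{-1}$ provided by Theorem \ref{thm:simple}. Iterating this identity yields $S_\ts^n = P_\ts \, \e^{i n \ts D_\ts} \, P_\ts^{-1}$ for every $n\in\Z$, and since $D_\ts$ is real diagonal the middle factor is a unitary diagonal matrix. Consequently
\[
  |S_\ts^n u| \;\leq\; \|P_\ts\|\,\|P_\ts^{-1}\|\,|u| \;\leq\; C\,|u|
\]
uniformly in $n\in\Z$ for $|\ts|$ small enough. Moreover, expanding \eqref{eq:what_we_want} at $\ts=0$ and comparing with \eqref{eq:consist} forces $P_0 D_0 P_0^{-1} = H$, so $P_0$ diagonalizes $H$ and, for each simple eigenvalue $\omega\in\sigma(H)$, the spectral projector reads $\Pi_\omega = P_0 E_\omega P_0^{-1}$, where $E_\omega$ is the diagonal $0$-$1$ matrix picking out the coordinate associated with $\omega$.

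The key idea is to introduce the \emph{deformed projectors}
\[
  \tilde{\Pi}_{\omega,\ts} \;:=\; P_\ts E_\omega P_\ts^{-1},
\]
which are the spectral projectors of $S_\ts$ onto its perturbed eigendirections. Since $E_\omega$ commutes with the diagonal matrix $D_\ts$, one gets exactly
\[
 \tilde{\Pi}_{\omega,\ts} S_\ts^n u \;=\; \e^{i n \ts (D_\ts)_{\omega\omega}}\,\tilde{\Pi}_{\omega,\ts}\, u, \qquad \text{hence}\qquad |\tilde{\Pi}_{\omega,\ts} S_\ts^n u| \;=\; |\tilde{\Pi}_{\omega,\ts} u|,
\]
which is the crux of the argument: the norm of the projected orbit is \emph{exactly} conserved. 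The bound $P_\ts = P_0 + \mathcal{O}(\ts^p)$ then gives $\tilde{\Pi}_{\omega,\ts} - \Pi_\omega = \mathcal{O}(\ts^p)$ in operator norm, so inserting $\tilde{\Pi}_{\omega,\ts} S_\ts^n u$ as a pivot and using the reverse triangle inequality together with the uniform bound $|S_\ts^n u|\leq C|u|$ yields
\[
  \Big||\Pi_\omega S_\ts^n u| - |\Pi_\omega u|\Big| \;\leq\; \|\Pi_\omega - \tilde{\Pi}_{\omega,\ts}\|\,\bigl(|S_\ts^n u| + |u|\bigr) \;\leq\; C\,|\ts|^p\,|u|,
\]
which is \eqref{eq:cor_est_eigs}.

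For \eqref{eq:cor_mass_energ}, I use that when $H$ is real symmetric the spectral projectors $\Pi_\omega$ are mutually orthogonal and sum to the identity, so
\[
  \mathcal{M}(v) \;=\; \sum_{\omega\in\sigma(H)} |\Pi_\omega v|^2, \qquad \mathcal{H}(v) \;=\; \sum_{\omega\in\sigma(H)} \omega\,|\Pi_\omega v|^2.
\]
Factoring $|\Pi_\omega S_\ts^n u|^2 - |\Pi_\omega u|^2 = \bigl(|\Pi_\omega S_\ts^n u| + |\Pi_\omega u|\bigr)\bigl(|\Pi_\omega S_\ts^n u| - |\Pi_\omega u|\bigr)$, bounding the first parenthesis by $C|u|$ through the uniform estimate on $S_\ts^n$, and applying \eqref{eq:cor_est_eigs} to the second, a summation over the (finitely many) eigenvalues yields the desired $\mathcal{O}(|\ts|^p|u|^2)$ bounds, the energy estimate absorbing $\max_\omega|\omega|$ into the constant. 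The $\sup$ over $n\in\Z$ follows because the factorization, and hence the uniform bound on $|S_\ts^n u|$, is valid for negative $n$ as well.

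The only delicate point is the introduction of the deformed projector $\tilde{\Pi}_{\omega,\ts}$: without this pivot one would naively try to compare $\Pi_\omega S_\ts^n$ with $\e^{i n\ts H}\Pi_\omega$ and lose any hope of a bound uniform in $n$, since the error $S_\ts - \e^{i\ts H} = \mathcal{O}(\ts^{p+1})$ accumulates linearly in $n$. Everything else is a routine combination of triangle inequalities and the smoothness statements in Theorem \ref{thm:simple}.
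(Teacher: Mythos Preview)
Your proof is correct and follows essentially the same approach as the paper: both use the factorization $S_\ts^n = P_\ts\,\e^{in\ts D_\ts}\,P_\ts^{-1}$, the unitarity of $\e^{in\ts D_\ts}$, the estimate $P_\ts = P_0 + \mathcal{O}(\ts^p)$, and then the orthogonal spectral decomposition of $\mathcal{M}$ and $\mathcal{H}$ when $H$ is symmetric. The only cosmetic difference is that you pivot through the deformed projector $\tilde\Pi_{\omega,\ts} = P_\ts E_\omega P_\ts^{-1}$ (exactly invariant under $S_\ts$), whereas the paper instead replaces $P_\ts$ by $P_0$ in $S_\ts^n$ to get $S_\ts^n = P_0\,\e^{in\ts D_\ts}\,P_0^{-1} + \mathcal{O}(\ts^p)$ and then multiplies by $\Pi_\omega = P_0 E_\omega P_0^{-1}$ directly; these are two equivalent ways of absorbing the same $\mathcal{O}(\ts^p)$ error.
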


\begin{proof}[Proof of Corollary \ref{cor:simple}] First, we focus on \eqref{eq:cor_est_eigs}. We note that by consistency, we have
$$
 D_0= P_0^{-1} H P_0.
$$
Since the eigenvalues of $H$ are simple, it follows that the spectral projectors are all of the form
\begin{equation} \label{eq:spect_proj}
   \Pi^{(j)}= P_0 (e_j \otimes e_j) P_0^{-1}, 
\end{equation}
where $e_1,\ldots,e_N$ denotes the canonical basis of $\mathbb{R}^N$. Then, we note that for all $n\in \mathbb{Z}$, we have
$$
S_\ts^n = P_\ts \, \e^{in\ts D_\ts} \, P_\ts^{-1}.
$$
Therefore, since $\e^{in\ts D_\ts}$ is uniformly bounded with respect to $\ts$ and $n$ (because $D_\ts$ is a real diagonal matrix) and $P_\ts = P_0 + \mathcal{O}(\ts^p)$, it follows that
$$
S_\ts^n = P_0 \, \e^{in\ts D_\ts} \, P_0^{-1} + \mathcal{O}(\ts^p),
$$
where the implicit constant in $\mathcal{O}$ term does not depend on $n$ (here and later). Therefore, it is enough to use the explicit formula \eqref{eq:spect_proj} to prove that 
$$
\Pi^{(j)} S_\ts^n =  P_0 (e_j \otimes e_j) P_0^{-1}  P_0 \, \e^{in\ts D_\ts} P_0^{-1} + \mathcal{O}(\ts^p) = \e^{in\ts (D_\ts)_{j,j}} \Pi^{(j)} + \mathcal{O}(\ts^p).
$$
As a consequence, the estimate \eqref{eq:cor_est_eigs} follows directly by the triangular inequality :
$$
|\Pi^{(j)} S_\ts^{n} u| =  |\e^{in\ts (D_\ts)_{j,j}} \Pi^{(j)}  u +  \mathcal{O}(\ts^p)(u)|  \le  |\e^{in\ts (D_\ts)_{j,j}} \Pi^{(j)}  u | + |u| \mathcal{O}(\ts^p) = | \Pi^{(j)}  u |  + |u| \mathcal{O}(\ts^p) .
$$
Now, we focus on \eqref{eq:cor_mass_energ}. Here, since $H$ is assumed to be symmetric, its eigenspaces are orthogonal. Therefore by the Pythagorean theorem, we have
$$
\mathcal{M}(u) = \sum_{\omega \in \sigma(H)} |\Pi_\omega (u)|^2 \quad \mathrm{and} \quad \mathcal{H}(u) = \sum_{\omega \in \sigma(H)} \omega |\Pi_\omega (u)|^2.
$$
As a consequence, \eqref{eq:cor_mass_energ} follows directly of \eqref{eq:cor_est_eigs}.
\end{proof}

The main limitation of Theorem \ref{thm:simple} is the assumption on the simplicity of the eigenvalues of $H$. Indeed, even if this assumption is typically satisfied, it depends only on the equation we aim at solving and not of the numerical method one uses. The following theorem, which is a refinement of Theorem \ref{thm:simple}, remedies this point 
by making an assumption on the leading term of the consistency error (which is typically satisfied for generic choices of numerical integrators). 

\begin{theorem}
\label{thm:refin}
Let  $H \in \mathbb{R}^{N\times N}$ be a real matrix and let $S_\ts \in \mathbb{C}^{N\times N}$ be a family of complex matrices depending smoothly on $\ts$ such that
\begin{itemize}
\item $S_\ts$ is a reversible map, i.e.
$$
  \overline{S}_\ts = S_\ts^{-1}; 
$$
\item $S_\ts$ is consistent with $\exp(i\ts H)$, i.e.
\begin{equation}
\label{eq:consist_bis}
S_\ts \mathop{=}_{\ts\to 0} \e^{i\ts H} + i \ts^{p+1} R + \mathcal{O}(\ts^{p+2}),
\end{equation}
where $p\geq 1$ is the order of consistency and $R$ is a real matrix\footnote{The fact that $R$ is a real matrix is a consequence of the reversibility of $S_\ts$.};
\item $H$ is diagonalizable and its eigenvalues are real;
\item for all $\omega \in \sigma(H)$, the eigenvalues of $\Pi_\omega R_{| E_{\omega}(H)}$ are real and simple, where $\Pi_\omega$ denotes the spectral projector 
on $E_\omega(H) := \mathrm{Ker}(H-\omega I_N)$.
\end{itemize}
Then there exist 
\begin{itemize}
\item $D_\ts$,  a family of real diagonal matrices depending smoothly on $\ts$,
\item $P_\ts$, a family of real invertible matrices depending smoothly on $\ts$,
\end{itemize}
such that, both $P_0^{-1} B P_0$ and $P_0^{-1} H P_0$ are diagonal, where $B := \sum_{\omega \in \sigma(H)} \Pi_\omega \, R \, \Pi_\omega$, and provided that $|\ts|$ is small enough, it holds that
\begin{equation}
\label{eq:what_we_want_bis}
S_\ts = P_\ts \, \e^{i\ts D_\ts} \, P_\ts^{-1}.
\end{equation}
\end{theorem}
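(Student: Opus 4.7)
The plan is to reduce the problem to the simultaneous diagonalization of a smooth family of real matrices, and then carry out this diagonalization in two nested stages: a Riesz-projector block decomposition along the spectrum of $H$, followed by a non-degenerate diagonalization inside each block made possible by the simple spectrum of $\Pi_\omega R_{|E_\omega(H)}$.

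First I would pass from $S_\ts$ to a real ``generator''. For $|\ts|$ small, consistency makes $S_\ts$ close to the identity, so its principal logarithm is well defined and smooth. Reversibility $\overline{S_\ts} = S_\ts^{-1}$ yields $\overline{\log S_\ts} = \log S_\ts^{-1} = -\log S_\ts$, hence $\log S_\ts = i A_\ts$ with $A_\ts \in \R^{N\times N}$ smooth in $\ts$. A Duhamel-type expansion applied to \eqref{eq:consist_bis} gives $A_\ts = \ts H + \ts^{p+1} R + \mathcal{O}(\ts^{p+2})$, so that $B_\ts := A_\ts/\ts$ extends smoothly across $\ts = 0$ with
\[
 B_\ts = H + \ts^p R + \mathcal{O}(\ts^{p+1}) \in \R^{N\times N}.
\]
It then suffices to produce a smooth real diagonalization $B_\ts = P_\ts D_\ts P_\ts^{-1}$, since $S_\ts = \e^{i\ts B_\ts} = P_\ts \e^{i\ts D_\ts} P_\ts^{-1}$.

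Next I would diagonalize $B_\ts$ in two steps. The distinct eigenvalues $\omega_1,\dots,\omega_k$ of $H = B_0$ do not collide under an $\mathcal{O}(\ts)$ perturbation, so integrating $(zI - B_\ts)^{-1}$ along small loops around each $\omega_i$ produces real smooth spectral projectors $\Pi_i(\ts)$ with $\Pi_i(0) = \Pi_{\omega_i}$. Choosing smooth real bases of their ranges yields a smooth real invertible $Q_\ts$ with $Q_0 = I$ such that $Q_\ts^{-1} B_\ts Q_\ts$ is block-diagonal, with the $i$-th block of the form $\omega_i I + \ts^p R_{\omega_i} + \mathcal{O}(\ts^{p+1})$ acting on $E_{\omega_i}(H)$ (the off-diagonal parts of $R$ are absorbed into the smooth block-diagonalization and do not contribute at this order). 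Inside each block, subtracting $\omega_i I$ and factoring out $\ts^p$ leaves the smooth real family $R_{\omega_i} + \mathcal{O}(\ts)$, whose limit $R_{\omega_i}$ has simple real spectrum by hypothesis; standard non-degenerate perturbation theory then furnishes real smooth $P_\ts^{(i)}$ and real diagonal $\Lambda_\ts^{(i)}$ such that the $i$-th block equals $P_\ts^{(i)} (\omega_i I + \ts^p \Lambda_\ts^{(i)}) (P_\ts^{(i)})^{-1}$.

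Setting $P_\ts := Q_\ts \cdot \mathrm{diag}(P_\ts^{(1)},\dots,P_\ts^{(k)})$ and $D_\ts := \mathrm{diag}(\omega_i I + \ts^p \Lambda_\ts^{(i)})_{i=1}^{k}$ gives the claimed data; the simultaneous diagonality of $P_0^{-1} H P_0$ and $P_0^{-1} B P_0$ with $B = \sum_i \Pi_{\omega_i} R \Pi_{\omega_i}$ follows because $Q_0 = I$ and each $P_0^{(i)}$ was built to diagonalize $R_{\omega_i}$. The main obstacle is the inner block step: because the perturbation in $B_\ts$ enters at order $\ts^p$ rather than $\ts$, standard perturbation theory cannot be applied directly to $B_\ts$ across $\ts = 0$, and it is precisely the simplicity hypothesis on $\Pi_\omega R_{|E_\omega(H)}$ that, after factoring out $\ts^p$, enables a smooth real diagonalization across the origin. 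This is exactly what distinguishes Theorem \ref{thm:refin} from Theorem \ref{thm:simple}.
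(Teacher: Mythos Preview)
Your proposal is correct and follows the same two-stage architecture as the paper: pass from $S_\ts$ to a real generator $H_\ts = H + \ts^p R + \mathcal{O}(\ts^{p+1})$, block-diagonalize along $\sigma(H)$, then diagonalize inside each block using the simplicity of $\Pi_\omega R_{|E_\omega(H)}$. The difference lies in the machinery used for the block-diagonalization. The paper does \emph{not} use Riesz projectors; instead it proves a dedicated conjugation lemma (their Lemma~\ref{lem:diag_cont}): given $M_\ts = M_0 + \mathcal{O}(\ts^p)$ with $M_0$ diagonalizable, there is a smooth real $\chi_\ts$ with $[\e^{\ts^p\chi_\ts} M_\ts \e^{-\ts^p\chi_\ts}, M_0]=0$, obtained by applying the implicit function theorem to $\mathrm{ad}_{M_0}(\e^{\ts^p \mathrm{ad}_{\chi}} M_\ts)=0$ on $\mathrm{Im}\,\mathrm{ad}_{M_0}$. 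This lemma is invoked twice---once with $M_0=H$ to produce the block structure, and once on each block with $M_0 = \Pi_{\omega_j} R_{|E_{\omega_j}}$. Your Riesz-projector route is the classical Kato-style spectral perturbation alternative; it is arguably more elementary in this finite-dimensional linear setting, while the paper's normal-form lemma is, as the authors themselves remark, closer in spirit to Birkhoff normal form and more readily transported to nonlinear problems.

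Two small points you should tighten. First, the assertion $Q_0 = I$ presupposes that $H$ has already been put in diagonal form (which the paper does explicitly as a WLOG reduction at the outset); without this, $Q_0$ is a matrix of eigenvectors of $H$, not the identity. Second, the parenthetical ``the off-diagonal parts of $R$ are absorbed into the smooth block-diagonalization and do not contribute at this order'' is the crux of identifying the $i$-th block as $\omega_i I + \ts^p \Pi_{\omega_i} R_{|E_{\omega_i}} + \mathcal{O}(\ts^{p+1})$ and deserves one line of justification: since $\Pi_i(\ts) = \Pi_{\omega_i} + \mathcal{O}(\ts^p)$, one has $Q_\ts = I + \mathcal{O}(\ts^p)$, and the $\mathcal{O}(\ts^p)$ correction to $Q_\ts$ contributes only through $[\,\cdot\,,H]$, which has vanishing diagonal blocks.
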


\begin{corollary} \label{cor:refin}
 In the setting of Theorem \ref{thm:refin}, there exists a constant $C>0$ such that, provided that $|\ts|$ is small enough,  for all $u\in \mathbb{C}^N$, 
 all $\omega \in \sigma(H)$ and all $\lambda \in \sigma(\Pi_\omega R_{| E_{\omega}(H)})$, we have
$$
 \sup_{n\geq 0} \, \Big||\mathcal{P}_{\lambda,\omega} S_\ts^{n} u| - |\mathcal{P}_{\lambda,\omega}  u| \Big| \leq C |\ts| |u|,
$$
 where $\mathcal{P}_{\lambda,\omega}$ denotes the projector along $\bigoplus_{(\eta,\mu)\neq(\lambda,\omega)} E_\eta(\Pi_\mu R_{| E_{\mu}(H)})$ 
 onto $E_\lambda(\Pi_\omega R_{| E_{\omega}(H)})$. 
 
Moreover, if $H$ and $R$ are symmetric, for all $\omega \in \sigma(H)$, one gets
$$
 \sup_{n\geq 0} \, \Big||\Pi_\omega S_\ts^{n} u|^2 - |\Pi_\omega  u|^2 \Big| \leq C |\ts| |u|^2,
$$
and the mass and the energy are almost conserved, i.e. for all $u\in \mathbb{C}^N$, it holds that
\[
  \sup_{n \in \mathbb{Z}} \,  \big| \mathcal{M}(S_\ts^n u) - \mathcal{M}( u)  \big|\leq C |\ts| |u|^2  \qquad \mathrm{ and } \qquad 
  \sup_{n \in \mathbb{Z}} \, \big| \mathcal{H}(S_\ts^n u) - \mathcal{H}( u)  \big| \leq C |\ts| |u|^2, 
\]
where, as before, $\mathcal{M}( u) = |u|^2 $ and $\mathcal{H}( u) = \overline{u}^T H u $.
\end{corollary}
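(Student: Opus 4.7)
The plan is to transpose the proof of Corollary \ref{cor:simple} to the refined setting, using Theorem \ref{thm:refin}. The starting point is the decomposition $S_h = P_h \, \e^{ihD_h} \, P_h^{-1}$ together with the structural fact that $P_0$ simultaneously diagonalizes $H$ and $B = \sum_\omega \Pi_\omega R \Pi_\omega$. Within each eigenspace $E_\omega(H)$, the operator $\Pi_\omega R_{|E_\omega(H)}$ is diagonalizable with simple real eigenvalues by assumption, so the joint $(H,B)$-decomposition $\bigoplus_{\lambda,\omega} E_\lambda(\Pi_\omega R_{|E_\omega(H)})$ consists of exactly $N$ one-dimensional subspaces, each spanned by one column of $P_0$. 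Consequently, each $\mathcal{P}_{\lambda,\omega}$ has the rank-one form $P_0 (e_j \otimes e_j) P_0^{-1}$, where $j = j(\lambda,\omega)$ labels the corresponding column, playing exactly the structural role of $\Pi^{(j)}$ in the simple case.

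From the smoothness of $P_h$, one has $P_h = P_0 + \mathcal{O}(h)$ and $P_h^{-1} = P_0^{-1} + \mathcal{O}(h)$, and since $D_h$ is real diagonal, $\e^{inhD_h}$ is uniformly bounded in both $n$ and $h$. Therefore
\[
S_h^n \;=\; P_0 \, \e^{inhD_h} \, P_0^{-1} \;+\; \mathcal{O}(h),
\]
with an implicit constant independent of $n$. Left-multiplying by $\mathcal{P}_{\lambda,\omega}$ and using that the coordinate projector $e_j \otimes e_j$ commutes with the diagonal matrix $\e^{inhD_h}$ gives
\[
\mathcal{P}_{\lambda,\omega} S_h^n \;=\; \e^{inh (D_h)_{j,j}} \, \mathcal{P}_{\lambda,\omega} \;+\; \mathcal{O}(h),
\]
and since $(D_h)_{j,j} \in \mathbb{R}$, the scalar $\e^{inh(D_h)_{j,j}}$ is unimodular. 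The triangle inequality then yields $\big||\mathcal{P}_{\lambda,\omega} S_h^n u| - |\mathcal{P}_{\lambda,\omega} u|\big| \leq C|h||u|$ uniformly in $n$.

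When in addition $H$ and $R$ are symmetric, $\Pi_\omega$ is the orthogonal projector on $E_\omega(H)$, $B$ is symmetric, and therefore the joint eigenspaces $E_\lambda(\Pi_\omega R_{|E_\omega(H)})$ are mutually orthogonal and each $\mathcal{P}_{\lambda,\omega}$ is an orthogonal projector. The Pythagorean theorem then gives $|\Pi_\omega v|^2 = \sum_\lambda |\mathcal{P}_{\lambda,\omega} v|^2$; applying this to $v = S_h^n u$ and $v = u$ and combining with the identity $|a|^2 - |b|^2 = (|a|-|b|)(|a|+|b|)$ and the previous estimate yields $\big||\Pi_\omega S_h^n u|^2 - |\Pi_\omega u|^2\big| \leq C|h||u|^2$. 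Summing over the finitely many $\omega \in \sigma(H)$ and invoking the orthogonal decompositions $\mathcal{M}(u) = \sum_\omega |\Pi_\omega u|^2$ and $\mathcal{H}(u) = \sum_\omega \omega |\Pi_\omega u|^2$ delivers the claimed almost-conservation estimates for the norm and the energy.

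The main (mild) obstacle compared with Corollary \ref{cor:simple} is that only an $\mathcal{O}(h)$ rate can be obtained, not $\mathcal{O}(h^p)$: Theorem \ref{thm:refin} supplies merely $P_h = P_0 + \mathcal{O}(h)$ from smoothness, in contrast to the $P_h = P_0 + \mathcal{O}(h^p)$ available in Theorem \ref{thm:simple}. Beyond this quantitative loss, the argument is essentially a transcription of the proof of Corollary \ref{cor:simple} to the simultaneous $(H,B)$-diagonalization framework provided by Theorem \ref{thm:refin}.
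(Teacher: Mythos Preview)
Your proposal is correct and follows essentially the same approach as the paper's proof: identifying the projectors $\mathcal{P}_{\lambda,\omega}$ with the rank-one projectors $P_0(e_j\otimes e_j)P_0^{-1}$ via the simultaneous diagonalization of $H$ and $B$ by $P_0$, using $P_h = P_0 + \mathcal{O}(h)$ from smoothness (rather than $\mathcal{O}(h^p)$), and then transcribing the argument of Corollary~\ref{cor:simple}. Your write-up is in fact more detailed than the paper's, which simply refers back to that corollary and notes the two key differences you spell out.
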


\begin{proof}[Proof of Corollary \ref{cor:refin}] The proof is almost identical to the one of Corollary \ref{cor:simple}. The key point is that, since 
both $P_0^{-1} B P_0$ and $P_0^{-1} H P_0$ are diagonal, then the projectors $\mathcal{P}_{\lambda,\omega}$ are exactly 
the projectors $\Pi^{(j)}$, $1\leq j \leq N$ (given by \eqref{eq:spect_proj}). Note that, contrary to Theorem \ref{thm:simple}, in Theorem \ref{thm:refin} one does not claim that $P_\ts = P_0 + \mathcal{O}(\ts^p)$. A priori, here, in general, the best estimate we expect is $P_\ts = P_0 + \mathcal{O}(\ts)$ (which follows directly from the smoothness of $P_\ts$  with respect to $\ts$). It is this loss which explains why, in Corollary \ref{cor:refin}, the error terms are of order $\mathcal{O}(h)$ whereas they are of order $\mathcal{O}(h^p)$ in Corollary \ref{cor:simple}. 
\end{proof}

\paragraph{Remark.} Before starting the proof of these theorems, let us provide some comments about the context and the ideas involved.
\begin{itemize}
\item In Theorem \ref{thm:simple} and its proof, we are just putting $S_h$ in Birkhoff normal form. The fact that $S_h$ can be diagonalized is due to the simplicity of the eigenvalues of $H$ while the fact that its eigenvalues are complex numbers of modulus $1$ is due the reversibility of $S_h$. This approach is robust and well known, in particular it can be extended to the nonlinear setting (see e.g.  \cite[section V.1]{hairer06gni}). Note that here, we reach convergence of the Birkhoff normal form because the system is linear.

\item Theorem \ref{thm:refin} is a refinement of Theorem \ref{thm:simple}. To prove the absence of resonances due to the multiplicity of the eigenvalues of $H$, we use the first correction to the frequencies generated by the perturbation of $H$ (i.e., the projections of $R$ in  Theorem \ref{thm:refin}). This approach is typical of 
what one does in the proof of Nekhoroshev theorems or KAM theorems (see also  \cite{hairer06gni}).

\item In order to give some intuition about the proof and the assumptions of Theorem \ref{thm:simple}, let us prove simply that, provided $h$ is small enough, $S_h$ is conjugated to a unitary matrix. Indeed, since $S_h$ is reversible it writes as
$$
S_h = e^{ihH_h},
$$
where $H_h = H + \mathcal{O}(h^p)$ is a real matrix (provided that $h$ is small enough). Now, since the set of the real matrices whose eigenvalues are simple and real is open in the space of the real matrices (by continuity of the eigenvalues)  and $H_h$ is a real perturbation of such a matrix ($H$ by assumption), we deduce that, provided $h$ is small enough, its eigenvalues are simple and real. This implies that $H_h$ is conjugated to a real diagonal matrix and so that $S_h$ is conjugated to a unitary matrix.
\end{itemize}

\subsection{Technical lemmas}
\label{sub3.2}

In the proof of the previous theorems we will make use of the following three lemmas.

\begin{lemma} \label{lem:conj} 
Let $M$ be a complex matrix and let $P$ be a complex invertible matrix. Then  $\mathrm{ad}_{P^{-1} M P}$ and $\mathrm{ad}_{M}$ are similar. More precisely, 
$$
\mathrm{ad}_{\mathrm{int}_P \, M} = (\mathrm{int}_P)  \mathrm{ad}_{M} (\mathrm{int}_P)^{-1},
$$
where $\mathrm{int}_P M := P^{-1} M P$. Here $\mathrm{ad}_M$ stands for the adjoint operator: $\mathrm{ad}_M X := [M, X] = M X - X M$, for any matrix $X$.
\end{lemma}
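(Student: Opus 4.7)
The plan is to verify this operator identity by a direct calculation, since it is essentially the statement that the adjoint representation is functorial under inner automorphisms. First I would unwind the definition of $\mathrm{int}_P$: it is an invertible linear map on $\mathbb{C}^{N\times N}$ with $(\mathrm{int}_P)^{-1}(Y) = P Y P^{-1}$, because $\mathrm{int}_P(Z) = P^{-1} Z P$ immediately gives $\mathrm{int}_P(P Y P^{-1}) = Y$.

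Next I would apply both sides of the asserted identity to an arbitrary matrix $X \in \mathbb{C}^{N\times N}$ and check that they coincide. Expanding the right-hand side yields
$$
\bigl(\mathrm{int}_P \circ \mathrm{ad}_M \circ (\mathrm{int}_P)^{-1}\bigr)(X) = \mathrm{int}_P\bigl([M,\, P X P^{-1}]\bigr) = P^{-1}\bigl(M P X P^{-1} - P X P^{-1} M\bigr) P.
$$
The inner $P^{-1} P$ and $P\, P^{-1}$ factors then telescope to give $(P^{-1} M P)\, X - X\, (P^{-1} M P) = \mathrm{ad}_{P^{-1} M P}(X) = \mathrm{ad}_{\mathrm{int}_P M}(X)$. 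Since $X$ was arbitrary, this proves the operator identity, and in particular $\mathrm{ad}_{\mathrm{int}_P M}$ and $\mathrm{ad}_M$ are similar through the conjugating map $\mathrm{int}_P$.

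There is essentially no obstacle here: the lemma reduces to the elementary fact that the inner automorphism $\mathrm{int}_P$ of the associative matrix algebra intertwines the adjoint actions of $M$ and of its conjugate $P^{-1} M P$. The only minor point worth being careful about is the direction of the conjugation (since $\mathrm{int}_P$ uses $P^{-1}$ on the left), but this is handled automatically by the cancellations above.
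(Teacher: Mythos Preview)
Your proof is correct and follows essentially the same approach as the paper: a direct computation showing that conjugation by $P$ intertwines $\mathrm{ad}_M$ and $\mathrm{ad}_{P^{-1}MP}$. The only cosmetic difference is that the paper verifies the equivalent identity $(\mathrm{int}_P)\,\mathrm{ad}_M = \mathrm{ad}_{\mathrm{int}_P M}\,(\mathrm{int}_P)$ applied to an arbitrary $X$, whereas you apply $(\mathrm{int}_P)^{-1}$ first; the underlying cancellation $P^{-1}[M,X]P = [P^{-1}MP,\,P^{-1}XP]$ is identical.
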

\begin{proof}
A straightforward calculation shows that, for any $X$,
\[
  (\mathrm{int}_P) \mathrm{ad}_{M}  X = P^{-1} [M, X] P = [P^{-1} M P, P^{-1} X  P] = \mathrm{ad}_{\mathrm{int}_P \, M} (P^{-1} X P) = 
  \mathrm{ad}_{\mathrm{int}_P \, M} (\mathrm{int}_P) X.
\]  
\end{proof}

\begin{lemma} \label{lem:sup} 
Let $M$ be a complex matrix. Then $M$ is diagonalizable if and only if the kernel and the image of $\mathrm{ad}_M$ are supplementary, i.e.
\begin{equation}
\label{eq:cap}
\mathrm{Ker}_{\mathbb{C}} \ \mathrm{ad}_M \cap \mathrm{Im}_{\mathbb{C}} \ \mathrm{ad}_M = \{0\}.
\end{equation}
\end{lemma}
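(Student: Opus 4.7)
My plan is to prove both implications by reducing to the structure of $M$ itself via Lemma~\ref{lem:conj} for $(\Rightarrow)$ and via the Jordan--Chevalley decomposition for $(\Leftarrow)$. The elementary observation that underlies everything is that, for any endomorphism $T$ of a finite-dimensional space, $\mathrm{Ker}\, T \cap \mathrm{Im}\, T = \{0\}$ if and only if $\mathrm{Ker}\, T = \mathrm{Ker}\, T^2$, i.e.\ $0$ is a semisimple eigenvalue of $T$ (the forward direction of this follows by picking $x \in \mathrm{Ker}\, T^2$ and noting $Tx \in \mathrm{Ker}\, T \cap \mathrm{Im}\, T$, and the reverse by writing $y = Tw$ with $Ty=0$, hence $T^2 w = 0 = Tw$).

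For $(\Rightarrow)$, write $M = P D P^{-1}$ with $D$ diagonal. Lemma~\ref{lem:conj} gives that $\mathrm{ad}_M$ and $\mathrm{ad}_D$ are conjugate as linear maps on $\C^{N\times N}$ (via $\mathrm{int}_P$), so it suffices to verify the property for $\mathrm{ad}_D$. A direct computation yields $\mathrm{ad}_D\, E_{ij} = (d_i-d_j)\, E_{ij}$, so $\mathrm{ad}_D$ is diagonalized in the basis of matrix units; its kernel is spanned by those $E_{ij}$ with $d_i=d_j$, its image by those with $d_i\neq d_j$, and these two subspaces are manifestly complementary.

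For $(\Leftarrow)$, I use the Jordan--Chevalley decomposition $M = S + N$ with $S$ semisimple, $N$ nilpotent, and $[S,N]=0$. A standard fact then gives that $\mathrm{ad}_M = \mathrm{ad}_S + \mathrm{ad}_N$ is the Jordan--Chevalley decomposition of $\mathrm{ad}_M$: the two operators commute, $\mathrm{ad}_S$ is semisimple, and $\mathrm{ad}_N$ is nilpotent. The generalized $0$-eigenspace of $\mathrm{ad}_M$ is therefore $\mathrm{Ker}\, \mathrm{ad}_S$ (i.e.\ the centralizer of $S$), and on it $\mathrm{ad}_M$ acts as $\mathrm{ad}_N$. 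The hypothesis that $0$ is a semisimple eigenvalue of $\mathrm{ad}_M$ thus forces $\mathrm{ad}_N |_{\mathrm{Ker}\, \mathrm{ad}_S} = 0$, i.e.\ $N$ commutes with every matrix that commutes with $S$.

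It remains to conclude $N=0$. Decomposing $\C^N = \bigoplus_\lambda V_\lambda$ into the eigenspaces of $S$, the centralizer of $S$ is the block-diagonal algebra $\bigoplus_\lambda \mathrm{End}(V_\lambda)$, and $N$ preserves each $V_\lambda$ because $[S,N]=0$. For the block $N|_{V_\lambda}$ to commute with every element of $\mathrm{End}(V_\lambda)$, it must be a scalar, and a nilpotent scalar vanishes; hence $N=0$ and $M=S$ is diagonalizable. The main obstacle, if any, is this final step isolating $N=0$ from commutation with the centralizer; everything else is a straightforward combination of Lemma~\ref{lem:conj} with the general characterization of semisimple eigenvalues of an endomorphism.
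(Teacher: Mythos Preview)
Your proof is correct. The forward implication $(\Rightarrow)$ coincides with the paper's: both reduce via Lemma~\ref{lem:conj} to a diagonal matrix and read off the kernel and image of $\mathrm{ad}_D$ from the matrix units.

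For the converse $(\Leftarrow)$ you take a genuinely different route. The paper passes to the Jordan normal form of $M$, restricts attention to one non-trivial Jordan block $\lambda I+\mathcal{N}$, and observes that $\mathrm{ad}_{\lambda I+\mathcal{N}}=\mathrm{ad}_{\mathcal{N}}$ is a nonzero nilpotent endomorphism, which forces $\mathrm{Ker}\cap\mathrm{Im}\neq\{0\}$. You instead invoke the Jordan--Chevalley decomposition $M=S+N$, identify $\mathrm{ad}_M=\mathrm{ad}_S+\mathrm{ad}_N$ as the Jordan--Chevalley decomposition of $\mathrm{ad}_M$, and deduce from semisimplicity of $0$ that $\mathrm{ad}_N$ vanishes on the centralizer of $S$; the block description of this centralizer then forces each $N|_{V_\lambda}$ to be a nilpotent scalar, hence zero. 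Both arguments are short and standard. The paper's block reduction is slightly more direct and uses only the Jordan normal form; your argument is more structural and transparently isolates the role of the nilpotent part, at the modest cost of quoting Jordan--Chevalley and the description of the center of $\mathrm{End}(V_\lambda)$.
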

\begin{proof} 
We can assume, in virtue of Lemma  \ref{lem:conj} and without loss of generality, that 
$M$ is in Jordan normal form\footnote{Indeed, the property \eqref{eq:cap} is clearly invariant by conjugation of $\mathrm{ad}_M$ and by 
Lemma \ref{lem:conj} we know that $\mathrm{ad}_M$ is conjugated to the adjoint representation of any Jordan normal form of $M$.}.
On the one hand, if $M$ is diagonal, we have $\mathrm{ad}_M A = (( m_{i,i} - m_{j,j}) A)_{i,j}$ and so the support of the matrices in 
$\mathrm{Ker}_{\mathbb{C}} \ \mathrm{ad}_M$ and $\mathrm{Im}_{\mathbb{C}} \ \mathrm{ad}_M$ are clearly disjoint (which implies \eqref{eq:cap}). 
Conversely, doing calculations by blocks it is enough to consider the case where $M= \lambda I_N + \mathcal{N}$ is a Jordan matrix 
(i.e. $\lambda \in \mathbb{C}$ and $\mathcal{N}$ nilpotent). Then we just have to note that $\mathrm{ad}_{\lambda I_N + \mathcal{N}} = \mathrm{ad}_{\mathcal{N}}$ 
and that since $\mathrm{ad}_{\mathcal{N}}$ is nilpotent necessarily we have 
$\mathrm{Ker}_{\mathbb{C}} \ \mathrm{ad}_{\mathcal{N}} \cap \mathrm{Im}_{\mathbb{C}} \ \mathrm{ad}_{\mathcal{N}} \neq \{0\}$.
\end{proof}

\begin{lemma}
\label{lem:diag_cont}
Let $M_\ts$ be a family of real matrices depending smoothly on $\ts$ and of the form 
$$
M_\ts = M_0 + \mathcal{O}(\ts^p), \quad \mbox{ where } \quad p\geq 1.
$$
 If $M_0$ is diagonalizable on $\mathbb{C}$, then there exists a family of real matrices $\chi_\ts$, depending smoothly on $\ts$, such that if $|\ts|$ is small enough,
  $\e^{-\ts^p \chi_\ts} M_\ts \, \e^{\ts^p\chi_\ts}$ commutes with $M_0$, i.e.
$$
[\e^{\ts^p\chi_\ts} M_\ts \, \e^{-\ts^p\chi_\ts} , M_0 ] =0.
$$
\end{lemma}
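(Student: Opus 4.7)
The plan is to reduce the conclusion to a cohomological equation that is solved by the implicit function theorem. Since $M_\ts$ is smooth in $\ts$ and $M_\ts - M_0 = \mathcal{O}(\ts^p)$, Hadamard's lemma (i.e., the vanishing of the first $p{-}1$ Taylor coefficients) gives a decomposition $M_\ts = M_0 + \ts^p N_\ts$ with $N_\ts$ a smooth family of real matrices. Because $M_0$ is real and diagonalizable over $\mathbb{C}$, Lemma \ref{lem:sup} yields
$$ \mathbb{C}^{N\times N} = \mathrm{Ker}_{\mathbb{C}}\,\mathrm{ad}_{M_0} \;\oplus\; \mathrm{Im}_{\mathbb{C}}\,\mathrm{ad}_{M_0}. $$
Since $M_0$ is real, $\mathrm{ad}_{M_0}$ commutes with complex conjugation, so both summands are stable under conjugation; hence the decomposition restricts to the real subspace
$$ \mathbb{R}^{N\times N} = K \oplus I, \qquad K := \mathrm{Ker}\,\mathrm{ad}_{M_0}\cap \mathbb{R}^{N\times N},\quad I := \mathrm{Im}\,\mathrm{ad}_{M_0}\cap \mathbb{R}^{N\times N}, $$
and $\mathrm{ad}_{M_0}: I \to I$ is a bijection. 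Let $\Pi_K,\Pi_I$ denote the associated real projectors.

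Next, for $\chi \in I$ and $\ts\in\mathbb{R}$, I would define
$$ \Phi(\chi,\ts) := \frac{1}{\ts^p}\bigl(\e^{\ts^p\chi}\,M_\ts\,\e^{-\ts^p\chi}-M_0\bigr). $$
Using $\e^{\ts^p\chi}M_\ts\e^{-\ts^p\chi} = \sum_{k\ge 0}\frac{\ts^{pk}}{k!}\mathrm{ad}_\chi^k M_\ts$ and $M_\ts - M_0 = \ts^p N_\ts$ one gets
$$ \Phi(\chi,\ts) = N_\ts + [\chi,M_\ts] + \sum_{k\ge 2}\frac{\ts^{p(k-1)}}{k!}\,\mathrm{ad}_\chi^k M_\ts, $$
which extends to a smooth real-valued function on $I \times \mathbb{R}$ with $\Phi(\chi,0) = N_0 - \mathrm{ad}_{M_0}\chi$. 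Since $M_0 \in \mathrm{Ker}\,\mathrm{ad}_{M_0}$, requiring that $\e^{\ts^p\chi}M_\ts\e^{-\ts^p\chi}$ commute with $M_0$ is equivalent to asking that $\Phi(\chi,\ts) \in K$, i.e. $F(\chi,\ts):= \Pi_I\,\Phi(\chi,\ts) = 0$.

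I would then apply the implicit function theorem to $F:I\times\mathbb{R}\to I$ at $\ts=0$. The equation $F(\chi,0)=0$ reads $\mathrm{ad}_{M_0}\chi = \Pi_I N_0$, which admits a unique solution $\chi_0\in I$ because $\mathrm{ad}_{M_0}|_I$ is a bijection; moreover the partial derivative $\partial_\chi F(\chi_0,0) = -\,\mathrm{ad}_{M_0}|_I$ is still invertible. This yields a smooth real family $\chi_\ts \in I$ with $F(\chi_\ts,\ts)=0$ for $|\ts|$ small, whence $\e^{\ts^p\chi_\ts}M_\ts\e^{-\ts^p\chi_\ts} - M_0 \in \ts^p K$, which is the desired commutation identity.

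The main obstacle I expect is purely algebraic: confirming that the splitting produced by Lemma \ref{lem:sup} descends to the real subspace and that $\mathrm{ad}_{M_0}$ remains bijective on the real image $I$. Once this is settled, the rest is a routine smooth implicit function theorem argument, and no control on higher-order cancellations is needed because the statement only asks for $\chi_\ts$ to exist smoothly, not to be $\mathcal{O}(1)$ of any particular size.
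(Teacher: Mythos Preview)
Your proposal is correct and follows essentially the same route as the paper: write $M_\ts=M_0+\ts^p R_\ts$, recast the commutation condition as a smooth equation in $(\ts,\chi)$ by dividing out $\ts^p$, restrict $\chi$ to the image of $\mathrm{ad}_{M_0}$, and invoke the implicit function theorem using that $\mathrm{ad}_{M_0}$ is invertible on its image (Lemma~\ref{lem:sup}). The only cosmetic difference is that the paper lands in $\mathrm{Im}_{\mathbb{R}}\,\mathrm{ad}_{M_0}$ by applying $\mathrm{ad}_{M_0}$ once more (their map $f$), whereas you project with $\Pi_I$; your explicit remark on why the $\mathrm{Ker}\oplus\mathrm{Im}$ splitting descends to real matrices is a point the paper leaves implicit.
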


\begin{proof} We aim at designing the family $\chi_\ts$ as solution of the equation
$$
\mathrm{ad}_{M_0} \left( \e^{\ts^p\chi_\ts} M_\ts \, \e^{-\ts^p\chi_\ts} \right) =0.
$$
Thanks to the well known identity $\e^{A} B \, \e^{-A} = \e^{\mathrm{ad}_A} B$, this equation rewrites as
\begin{equation} \label{eq:v1}
 \mathrm{ad}_{M_0} \left( \e^{\ts^p \mathrm{ad}_{\chi_\ts}}  M_\ts \right) = 0.
\end{equation}
Next we write the Taylor expansion of $M_\ts$ at order $p$ as
$$
M_\ts = M_0 + \ts^p R_\ts,
$$
where $R_\ts$ is a family of real matrices depending smoothly on $\ts$. Then, isolating the terms of order $0$ (and dividing by $\ts^p$), the equation \eqref{eq:v1} leads to
$$
f(\ts,\chi_\ts) := \mathrm{ad}_{M_0} \left(   \e^{\ts^p \mathrm{ad}_{\chi_\ts}}  R_\ts-\varphi_1(\ts^p \mathrm{ad}_{\chi_t}) \, \mathrm{ad}_{M_0} \chi_\ts  \right) = 0,
$$
where
$\varphi_1(z) := \frac{e^z - 1}z$. We restrict ourselves to  $\chi_\ts$ in $\mathrm{Im}_{\mathbb{R}} \, \mathrm{ad}_{M_0}$ and consider $f$ as a 
smooth map from $\mathbb{R} \times \mathrm{Im}_{\mathbb{R}} \, \mathrm{ad}_{M_0}$ to $\mathrm{Im}_{\mathbb{R}} \, \mathrm{ad}_{M_0}$. To solve the equation $f(\ts,\chi_\ts)=0$ using the implicit function theorem, we just have to design $\chi_0$ so that
$$
f(0,\chi_0) =  \mathrm{ad}_{M_0}    R_0- \mathrm{ad}_{M_0} \chi_0  = 0
$$
and prove that $\mathrm{d}_\chi f(0,\chi_0) = - \mathrm{ad}_{M_0} : \mathrm{Im}_{\mathbb{R}} \, \mathrm{ad}_{M_0} \to \mathrm{Im}_{\mathbb{R}} \, \mathrm{ad}_{M_0}$ is invertible. Actually, these properties are clear because the first one is a consequence of the second one, whereas the second follows directly from Lemma \ref{lem:sup}.
\end{proof}

\subsection{Proofs of the theorems}
\label{sub3.3}

We are now in a position to prove Theorems \ref{thm:simple} and  \ref{thm:refin}. Without loss of generality, and to simplify notations, we assume that $H$ is diagonal
$$
H = \begin{pmatrix} \omega_1 I_{n_1} \\ & \ddots \\ & & \omega_d I_{n_d} \end{pmatrix},
$$
where $\omega_1 <\cdots < \omega_d$ denote the eigenvalues of $H$ and $n_1,\cdots,n_d$ are positive integers satisfying $n_1+\cdots+n_d=N$.

Thanks to the consistency assumption \eqref{eq:consist_bis} (which is equivalent to \eqref{eq:consist}), provided that $|\ts|$ is small enough, $S_\ts$ rewrites as
$$
S_\ts = \e^{i \ts H_\ts}, \quad \mbox{ where } \quad H_\ts = H+\ts^p R + \mathcal{O}(\ts^{p+1}). 
$$
Moreover, the reversibility assumption $S_\ts^{-1} = \overline{S}_\ts $ implies that $H_\ts$ is a real matrix (provided that $|\ts|$ is small enough). Note that, hence, 
we deduce that $R$ is also a real matrix. Then, applying Lemma \ref{lem:diag_cont} to $H_h$, we get a family of real matrices $\chi_\ts$ such that, 
provided that $|\ts|$ is small enough,
$$
[W_\ts , H ] =0, \qquad \mbox{ where } \qquad W_\ts = \e^{\ts^p\chi_\ts} H_\ts \, \e^{-\ts^p\chi_\ts}.
$$
We conclude that $W_\ts$ is block-diagonal (with the same structure of blocks as $H$), i.e. there exists some $n_j \times n_j$ real matrices $W_\ts^{(j)}$ such that
\begin{equation} \label{wh}
W_\ts = \begin{pmatrix} W_\ts^{(1)} \\ & \ddots \\ & & W_\ts^{(d)} \end{pmatrix}.
\end{equation}
As a consequence, if the eigenvalues of $H$ are simple (i.e. $d=N$ and $n_j=1$ for all $j$) then $W_\ts$ is diagonal. Therefore, in this case, it is enough to set 
$P_\ts=\e^{-\ts^p\chi_\ts}$ and $W_\ts = D_\ts$ to conclude the proof of Theorem \ref{thm:simple}.

\bigskip

So, from now on, we only focus on the proof of Theorem \ref{thm:refin}. First, we aim at identifying the matrices on the blocks in (\ref{wh}). The Taylor expansion of $W_\ts$ is
clearly
$$
W_\ts = H +  \ts^{p} B + \mathcal{O}(\ts^{p+1}),  \qquad \mbox{ with } \qquad B := R+ [\chi_0,H].
$$
However, since $[W_\ts,H]=0$, we deduce that $[B,H]=0$ and so that $B$ is block-diagonal. Moreover, since the matrix $[\chi_0,H]$ is identically equal to zero 
on the diagonal blocks, the diagonal blocks of $B$ are exactly those of $R$. As a consequence, with a slight abuse of notations, we may write
$$
W_\ts^{(j)} = \omega I_{n_j} + \ts^p B^{(j)} + \ts^{p+1}Y_\ts^{(j)}, \qquad \mbox{ where } \qquad  B^{(j)} := \Pi_{\omega_{j}} R_{| E_{\omega_j}(H)}
$$
and $Y_\ts^{(j)}$ is a family of real matrices depending smoothly on $\ts$. 

Next we aim at diagonalizing these blocks. By assumption, the eigenvalues of each matrix $B^{(j)}$ are real and simple. Therefore, all  $B^{(j)}$ are diagonalizable. 
As a consequence, and again by applying Lemma \ref{lem:diag_cont}, we get a family of real matrices $\Upsilon^{(j)}_\ts$ such that if $|\ts|$ is small enough, for all $j\in \llbracket 1,d\rrbracket$ we have
$$
\Big[ \e^{\ts \Upsilon^{(j)}_\ts} (B^{(j)} + \ts Y_\ts^{(j)})  \e^{-\ts \Upsilon^{(j)}_\ts }, B^{(j)} \Big] = 0.
$$
This means that the eigenspaces of $B^{(j)}$ are stable by the action of $ \e^{\ts \Upsilon^{(j)}_\ts} (B^{(j)} + \ts Y_\ts^{(j)})  \e^{-\ts \Upsilon^{(j)}_\ts }$. However, by assumption, these spaces are lines. Therefore, if $Q^{(j)}$ is a real invertible matrix such that $ Q^{(j)} B^{(j)}   (Q^{(j)})^{-1}$ is diagonal then 
$ Q^{(j)} \e^{\ts \Upsilon^{(j)}_\ts} (B^{(j)} + \ts Y_\ts^{(j)})  \e^{-\ts \Upsilon^{(j)}_\ts }(Q^{(j)})^{-1} $ is also diagonal.

Finally, as a consequence, setting
$$
P_\ts := \e^{-\ts^p\chi_\ts} \begin{pmatrix}  \e^{-\ts \Upsilon^{(1)}_\ts} Q^{(1)} \\ & \ddots \\ & &  \e^{-\ts \Upsilon^{(d)}_\ts} Q^{(d)}  \end{pmatrix} 
$$
we have proven that $D_\ts := P_\ts^{-1} H_\ts P_{\ts}$ is real diagonal, which concludes the proof of Theorem \ref{thm:refin}.

\subsection{Applications to reversible splitting and composition methods}
\label{sub3.4}

Theorems \ref{thm:simple} and  \ref{thm:refin} shed light on the behavior observed in the examples collected in Section \ref{section2}. Thus, suppose 
$H = A + B$ is a real symmetric matrix, with $A$, $B$ also real. Furthermore, consider a splitting scheme $S_h$ of the form (\ref{split1}) 
with coefficients satisfying the symmetry conditions (\ref{sc1}) and consistency,
$$
a_0 + \cdots + a_{2n} = 1, \qquad\qquad b_0 + \cdots + b_{2n-1} =1.
$$ 
Clearly, $S_h$ is a reversible map and moreover, it is consistent with $\e^{i\ts H}$ at least at order $1$, so that (\ref{eq:consist}) holds with $p \ge 1$. Since $H$
is real symmetric, it is diagonalizable. Therefore, if the eigenvalues of $H$ are simple, the dynamics of $(S_\ts^n)_{n\in \mathbb{Z}}$ is given by Theorem \ref{thm:simple}:
for sufficiently small $h$, there exist real matrices $D_h$ (diagonal) and $P_h$ (invertible) so that  $S_h^n = P_h \, \e^{i n D_h} P_h^{-1}$, all the eigenvalues of $S_h$
verify $|\omega_j| = 1$ and $\mathcal{M}(u)$ and $\mathcal{H}(u)$ are almost preserved for long times. This corresponds to the examples 
of Figure \ref{figure1}. The same
conclusions apply as long as $H$ is a real matrix with all its eigenvalues real and simple (Figure \ref{figure2}, left), whereas the general case of complex eigenvalues is
not covered by the theorem, and no preservation is ensured (Figure \ref{figure2}, right).

Suppose now that the real matrix $H$ has multiple real eigenvalues, but is still diagonalizable, and that $A$ and $B$ are real and symmetric. In that case, a symmetric-conjugate
splitting method satisfy both conditions (\ref{sc2}) and (\ref{sc3}), so that it can be written as 
$$
S_\ts = e^{i\ts H_\ts},
$$
where $H_\ts$ is a family of real matrices whose even terms in $h$ are symmetric and odd terms are skew-symmetric. Suppose in addition that $S_h$ is of even order
(i.e., $p$ is even in (\ref{eq:consist_bis})). In that case the matrix $R$ in Theorem \ref{thm:refin} is symmetric, and so its eigenvalues are real.
Moreover, since $R$ strongly depends on the coefficients $a_j, b_j$ and the decomposition $H=A+B$, it is very likely that typically the eigenvalues of the operators $\Pi_\omega R_{| E_{\omega}(H)}$ are simple and so that the dynamics of $(S_\ts^n)_{n\in \mathbb{Z}}$ is given by Theorem \ref{thm:refin} and is therefore similar to the one of
$(\e^{in\ts H})_{n\in \mathbb{Z}}$. Notice that this does not necessarily hold if the scheme is of odd order and/or $A$ and $B$ are not symmetric. This phenomenon is
clearly illustrated in the examples of Figure \ref{figure3} by methods $S_h^{[3,2]}$ and $S_h^{[4]}$.

Notice, however, that method $S_h^{[3,1]}$, although of odd order, 
works in fact better than expected from the previous considerations. The reason for this behavior resides in the following 

\begin{proposition}
The 3th-order symmetric-conjugate splitting method
\[
   S_h^{[3,1]} = \e^{i h \overline{b}_0 B} \, \e^{i h \overline{a}_1 A} \, \e^{i h b_1 B} \, \e^{i h a_1 A} \,  \e^{i h b_0 B},
\]
with $a_1 = \frac{1}{2} + i \frac{\sqrt{3}}{6}$, $b_0 = \frac{a_1}{2}$, $b_1 = \frac{1}{2}$, is indeed conjugate to a reversible integrator $V_h$
of order 4, i.e., there exists a real near-identity transformation $F_h$ such that $F_h \, S_h^{[3,1]} \, F_h^{-1} = V_h = \e^{i h H} + \mathcal{O}(h^5)$ and 
$\overline{V}_h = V_h^{-1} $.
\end{proposition}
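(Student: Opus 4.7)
The plan is to exploit a factorization of $S_h^{[3,1]}$ through the standard symmetric Strang splitting. Using $a_1 + \overline{a}_1 = 1$, the middle factor $\e^{i h b_1 B} = \e^{i h B/2}$ splits as $\e^{ih\overline{a}_1 B/2}\,\e^{ih a_1 B/2}$, and a direct regrouping yields
\[
  S_h^{[3,1]} \;=\; \Phi_{\overline{a}_1 h} \circ \Phi_{a_1 h}, \qquad \Phi_t := \e^{it B/2}\,\e^{it A}\,\e^{it B/2}.
\]
Since the kernel $\Phi_t$ is time-symmetric, its BCH generator contains only even powers of $t$: $\Phi_t = \e^{it K_t}$ with $K_t = H + t^2 K_2 + t^4 K_4 + \cdots$, and every $K_{2\ell}$ is a real matrix (symmetric when $A, B$ are), because the BCH series of a real-coefficient splitting produces nested commutators of $A$ and $B$ with rational coefficients.

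Next, I would expand $\log S_h^{[3,1]}$ by BCH. Setting $X := ih\overline{a}_1 K_{\overline{a}_1 h}$ and $Y := iha_1 K_{a_1 h}$, the linear term contributes $ih(\overline{a}_1 + a_1)H + ih^3(\overline{a}_1^3 + a_1^3)K_2 + \mathcal{O}(h^5)$, which collapses to $ihH + \mathcal{O}(h^5)$ thanks to the identities $\overline{a}_1 + a_1 = 1$ and $\Re(a_1^3) = 0$ (built into the choice of $a_1$, encoding third-order consistency). The bracket $\tfrac{1}{2}[X,Y] = -\tfrac{h^2}{6}[K_{\overline{a}_1 h}, K_{a_1 h}]$ evaluates, via $a_1^2 - \overline{a}_1^2 = i/\sqrt{3}$, to $-\tfrac{i h^4}{6\sqrt{3}}[H,K_2] + \mathcal{O}(h^6)$, and a straightforward degree count shows that all higher nested BCH terms are $\mathcal{O}(h^5)$. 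Therefore $S_h^{[3,1]} = \exp(ih H_h)$ with
\[
  H_h \;=\; H + c\,h^3\,[K_2, H] + \mathcal{O}(h^4), \qquad c := \frac{1}{6\sqrt{3}}.
\]
The key structural fact is that the leading error lies in $\mathrm{Im}(\mathrm{ad}_H)$.

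Given this, I define the real near-identity processor $F_h := \exp(-c\, h^3\, K_2)$, which is real since $K_2$ is. A direct computation gives
\[
  F_h\, H_h\, F_h^{-1} \;=\; H_h - c h^3 [K_2, H_h] + \mathcal{O}(h^6) \;=\; H + \mathcal{O}(h^4),
\]
and since matrix conjugation commutes with the exponential, $V_h := F_h\, S_h^{[3,1]}\, F_h^{-1} = \exp\bigl(ih\,F_h H_h F_h^{-1}\bigr) = \e^{ihH} + \mathcal{O}(h^5)$, so $V_h$ is of order $4$. The reversibility of $V_h$ is then immediate: $\overline{F_h} = F_h$ and $\overline{S_h^{[3,1]}} = (S_h^{[3,1]})^{-1}$ give $\overline{V_h} = F_h\,(S_h^{[3,1]})^{-1}\,F_h^{-1} = V_h^{-1}$.

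The main technical obstacle is the BCH bookkeeping leading to the structural identity $R = c\,[K_2, H]$. A priori the $h^3$ error of a generic $C$-reversible third-order method is an arbitrary real skew-symmetric combination of the independent $4$-fold nested commutators of $A$ and $B$, and there is no reason for it to lie in $\mathrm{Im}(\mathrm{ad}_H)$. What makes the argument work here is precisely the factorization $S_h^{[3,1]} = \Phi_{\overline{a}_1 h}\Phi_{a_1 h}$: the BCH of a two-term composition produces at order $h^3$ only the single bracket $[K_{\overline{a}_1 h}, K_{a_1 h}]$, and the even-in-$t$ structure of $K_t$ forces this bracket to reduce to a multiple of $[H, K_2]$, namely a commutator with $H$ — exactly what the processing step requires.
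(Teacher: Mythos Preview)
Your proof is correct and follows essentially the same route as the paper: both recognize the factorization $S_h^{[3,1]} = \mathcal{S}_{\bar\alpha h}^{[2]}\,\mathcal{S}_{\alpha h}^{[2]}$ through the Strang splitting, expand via BCH to see that the leading $h^4$ term in $\log S_h^{[3,1]}$ is a real multiple of $[H,K_2]$ (the paper writes this matrix as $F_3$), and then conjugate by the real near-identity map $\exp(\lambda h^3 K_2)$ to remove it while preserving reversibility. Your $K_2$ is the paper's $-F_3$ and your constant $c=1/(6\sqrt{3})=\sqrt{3}/18$ coincides with the paper's $\lambda$.
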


\begin{proof}
Method $S_h^{[3,1]}$ constitutes in fact a particular case of a composition $\psi_h = \mathcal{S}_{\bar{\alpha} h}^{[2]} \, \mathcal{S}_{\alpha h}^{[2]}$, where
$\mathcal{S}_{h}^{[2]}$ is a time-symmetric 2nd-order method and $\alpha = a_1$. Specifically, $S_h^{[3,1]}$ is recovered when
$\mathcal{S}_{h}^{[2]} = \e^{\frac{h}{2} B} \, \e^{h A} \, \e^{\frac{h}{2} B}$.
Therefore, it can be written as
\[
   \mathcal{S}_{h}^{[2]} = \exp( i h H - i h^3 F_3 + i h^5 F_5 + \cdots)
\]
for certain real matrices $F_{2j+1}$. In consequence, by applying the BCH formula, one gets $\psi_h = \e^{W(h)}$, with
\[
     W(h) = i h H + \frac{1}{2} h^4 |\alpha|^2(\alpha^2-\bar{\alpha}^2) [H, F_3] + i h^5 \big( w_{5,1} F_5 + w_{5,2} [H,[H, F_3]] \big) + \mathcal{O}(h^6).
\]
Here $w_{5,j}$ are polynomials in $\alpha$. Now let us consider
\[
  V_h = \e^{V(h)} = \e^{\lambda h^3 F_3} \, \e^{W(h)} \,   \e^{-\lambda h^3 F_3} 
\]
for a given parameter $\lambda$. Then, clearly,
\[
  V(h) = \e^{\lambda h^3 \mathrm{ad}_{F_3}} W(h) = i h H + h^4 \left( \frac{1}{2} \alpha^3 - i \lambda \right) [H, F_3] + \mathcal{O}(h^5),
\]
so that by choosing $\lambda = -\frac{i}{2} \alpha^3  = - \frac{\sqrt{3}}{18}$, we have $V(h) = i h H  + \mathcal{O}(h^5)$ and the stated result is obtained, 
with $F_h =  \e^{\lambda h^3 F_3} $.    
\end{proof}

This result can be generalized as follows: given a time-symmetric method $\mathcal{S}_{h}^{[2k]}$ of order $2k$, if $\alpha$ is chosen so that
the composition $\psi_h = \mathcal{S}_{\bar{\alpha} h}^{[2k]} \, \mathcal{S}_{\alpha h}^{[2k]}$ is of order $2k+1$, then $\psi_h$ is conjugate to a 
reversible method of order
$2k+2$.

\

Theorems \ref{thm:simple} and  \ref{thm:refin} also allow one to explain the good behavior shown by symmetric-conjugate composition methods
for this type of problems.
In fact, suppose $H$ is a real symmetric matrix and $\Phi_H^z$ is a family of linear maps which are consistent with $\e^{izH}$ at least at order $1$ and satisfy
$$
(\Phi_H^z)^{-1} = \overline{\Phi_H^{\overline{z}}}.
$$ 
If we define $S_\ts$ as  the symmetric-conjugate composition 
$$
S_\ts = \Phi_H^{\alpha_0 \ts} \cdots \Phi_H^{\alpha_n \ts}, 
$$
where $\alpha_j$ are some complex coefficients satisfying the symmetry condition
$$
\alpha_{n-j} = \overline{\alpha}_j, \qquad j=1,2,\ldots
$$
and the consistency condition
$$
\alpha_0 + \cdots + \alpha_n = 1,
$$
then $S_\ts$ is a reversible map. Moreover, it is consistent with $\e^{i\ts H}$ at least at order $1$. Therefore, one can apply 
Theorem \ref{thm:simple} and Theorem \ref{thm:refin} also in this case. Notice, in particular, that even if the maps $\e^{i h a_j A}$ and/or $\e^{i h b_j B}$
in the symmetric-conjugate splitting method (\ref{split1}) are not computed exactly, but only conveniently approximated (for instance, by the midpoint rule), the previous theorems
still apply, so that one can expect good long term behavior from the resulting approximation.

\section{Symmetric-conjugate splitting methods for the Schr\"odinger equation}
\label{section4}


An important application of the previous results corresponds to
the numerical integration of the time dependent Schr\"odinger equation ($\hbar = m = 1$)
\begin{equation}   \label{Schr0}
  i \frac{\partial}{\partial t} \psi (x,t)   = \hat{H} \psi (x,t), \qquad\quad \psi(x,0)=\psi_0(x),
\end{equation}
where $\psi: \mathbb{R}^3 \times \mathbb{R} \longrightarrow \mathbb{C}$. The Hamiltonian operator $\hat{H}$ is the sum
$\hat{H} = \hat{T} + \hat{V}$
of the kinetic energy operator $\hat{T}$ and the potential $\hat{V}$. Specifically,
\[
  (\hat{T} \psi)(x) = -\frac{1}{2} \Delta \psi(x,t), \qquad\quad (\hat{V} \psi)(x) = \hat{V}(x) \psi(x,t).
\]
In addition, a simple computation shows that $[\hat{V},[\hat{T},\hat{V}]] \ \psi  = | \nabla \hat{V}|^2 \psi$, and therefore 
\begin{equation} \label{prop.1}
 [\hat{V},[\hat{V},[\hat{V},\hat{T}]]] \ \psi = 0.
\end{equation}  
Assuming $d=1$ and 
periodic boundary conditions, the application of a pseudo-spectral
method in space (with $N$ points) leads to the $N$-dimensional system (\ref{problem1}), where $u(0) = u_{0} \in \mathbb{C}^N$
and $H$ represents the (real symmetric) $N \times N$ matrix associated with the operator $-\hat{H}$ \cite{lubich08fqt}. Now
\[ 
   H = A + B,
\]
where $A$ is the (minus) differentiation matrix corresponding to the discretization of $\hat{T}$ (a real and symmetric matrix) and $B$ is the diagonal
matrix associated to $-\hat{V}$ at the grid points. Since  $\exp(t A)$ can be efficiently computed with the fast Fourier transform (FFT)
algorithm, it is a common practice to use splitting methods of the form (\ref{split1}) to integrate this problem. In this respect, notice that property
(\ref{prop.1}) will be inherited by the matrices $A$ and $B$ only if the number of discretization points $N$ is sufficiently large to achieve spectral
accuracy, i.e.,
\begin{equation} \label{rkn}
   [B,[B,[B,A]]] u= 0 \qquad \mbox{ if $N$ is large enough. }
\end{equation}
Assuming this is satisfied, then  there is a reduction in the number of conditions necessary to construct a method (\ref{split1}) of a given order $p$ \cite{hairer06gni,blanes16aci}.
Integrators of this class are
 sometimes
called Runge--Kutta--Nystr\"om (RKN) splitting methods \cite{blanes22rkn}. 

Two further points are worth remarking. First, 
the computational cost of evaluating (\ref{split1}) is not significantly increased by incorporating complex coefficients into the scheme, since
one has to use complex arithmetic anyway. Second, since $\sum_j a_j = 1$ for a consistent method, if $a_j \in \mathbb{C}$, then 
both positive \emph{and} negative imaginary parts are present, and this can lead to severe instabilities due to the unboundedness of the
Laplace operator \cite{castella09smw,hansen09hos}. On the other hand, the spurious effects introduced by complex $b_j$ can be eliminated (at least for sufficiently small values of
$h$) by introducing an artificial cut-off bound in the potential when necessary. 

In view of these considerations, we next limit our exploration to symmetric-conjugate splitting methods of the form (\ref{split1}) with $0 < a_j < 1$ and
$b_j \in \mathbb{C}$ with $\Re(b_j) > 0$ to try to reduce the size of the error terms appearing in the asymptotic expansion of the modified
Hamiltonian $H_h$ associated with the integrator. 

For simplicity, we denote the {symmetric-conjugate} splitting schemes $S_h$  by their sequence of coefficients as 
\begin{equation} \label{aba}
  (a_0, b_0, a_1, b_1, \ldots, a_r, b_r, a_r, \ldots, \overline{b}_1, a_1, \overline{b}_0,  a_0).
\end{equation}
As a matter of fact, since $A$ and $B$ are sought to verify (\ref{rkn}), sequences starting with $B$ may lead to schemes with a different efficiency, 
so that we also analyze methods of the form
\begin{equation} \label{bab}
  (b_0, a_0, b_1, a_1, \ldots, b_r, a_r, \overline{b}_r, \ldots, a_1, \overline{b}_1, a_0, \overline{b}_0).
\end{equation}
Schemes (\ref{aba}) and (\ref{bab}) include integrators where the central exponential corresponds to $A$ (when $b_r=0$) and $B$ (when $a_r=0$),
respectively. The method has $s$ stages if the number of exponentials of $A$ is precisely $s$ for the scheme (\ref{bab}) or $s+1$ for the scheme (\ref{aba}).


The construction process of methods within this class is detailed elsewhere (e.g. \cite{blanes08sac,blanes22rkn} and references therein), so that
it is only summarized here.
 First, we get the order conditions a symmetric-conjugate scheme has
to satisfy to achieve a given order $p=4, 5$ and 6. These are polynomial equations depending on the coefficients $a_j$, $b_j$, 
and can be obtained by identifying a basis in the Lie algebra generated by $\{A, B\}$ and using repeatedly 
the BCH formula to express the splitting method as $S_h = \exp(h H_h)$, with $H_h$ in terms of $A$, $B$ and their nested commutators.
The order conditions up to order $p$ are obtained by requiring that $H_h = H + \mathcal{O}(h)^{p+1}$, and the number is 7, 11 and 16 for orders 4, 5 and 6, respectively.   

Second, we take compositions (\ref{aba}) and (\ref{bab}) involving the minimum number of stages required to solve the order conditions and get eventually
all possible solutions with the appropriate symmetry. Sometimes, one has to add parameters, because there are no such solutions. 
In particular, there are no 4th-order schemes with 4 stages with both $a_j > 0$ and $\Re(b_j) > 0$.

Even when there are appropriate solutions, it may be convenient to explore compositions with additional stages to have free parameters for optimization. 
This strategy usually pays off
when purely real coefficients are involved, and so it is worth to be explored also in this context. 
Of course, some optimization criterion related with the error terms and the computational effort has to be adopted. 
In our study we look at the error terms in the expansion of $H_h$ at successive orders and the size of the $b_j$ coefficients.
Specifically, we compute for each method of order, say, $p$, the quantities 
\begin{equation} \label{delta}
   \Delta_b := \sum_j |b_j| \qquad \mbox{ and } \qquad E_f^{(r+1)} := s \, \big( \mathcal{E}_{r+1} \big)^{1/r}, \qquad r = p, p+1, \ldots
\end{equation}   
Here $s$ is the number of stages and $\mathcal{E}_{r+1}$ is the Euclidean norm of the vector of error coefficients in $H_h$ at higher orders than the
method itself. In particular, for a method of order $6$, $E_f^{(7)}$ gives an estimate of the efficiency of the scheme by considering only the
error at order 7. By computing  $E_f^{(8)}$ and $E_f^{(9)}$ for this method we get an idea of how the higher order error terms behave. It will be of interest,
of course, to reduce these quantities as much as possible to get efficient schemes.

Solving the polynomial equations required to construct splitting methods with additional stages is not a trivial task, especially for orders 5 and 6. In these
cases we have used the Python function \texttt{fsolve} of the \emph{SciPy} library, with a large number of initial points in the space of parameters 
to start the procedure. From the total number of valid solutions thus obtained, we have selected those leading
to reasonably small values of all quantities (\ref{delta}) and checked them on numerical examples.

The corresponding values for the most efficient methods we have found by following this approach have been collected in Table \ref{tau:efrknsc}, 
where $\mathcal{NA}_{s}^{*[p]}$ refers to a symmetric-conjugate method of type (\ref{aba}) of order $p$ involving $s$ stages, and $\mathcal{NB}_{s}^{*[p]}$ 
is a similar scheme of type (\ref{bab}). For completeness, we have also included the most efficient integrators of order 4, 6 and 8 with
real coefficients for systems satisfying the condition (\ref{rkn}) (same notation without $*$) and also the 
symmetric-conjugate splitting schemes presented in \cite{goth20hoa,goth22hoa}
(denoted by $\mathcal{GB}_{s}^{*[p]}$). They do not take into account the property (\ref{rkn}) for their formulation. 

In Table \ref{tau:efrknsc} we also write the value of $\Delta_a := \sum_j |a_j|$ and $\Delta_b := \sum_j |b_j|$ for each method. 
Of course, by construction, $\Delta_a = 1$ for all
symmetric-conjugate integrators. The coefficients of the most  efficient schemes we have found (in boldface) are collected in Table \ref{coefRKN}.

\begin{table}[!h]
{\small
  \renewcommand\arraystretch{1.4}
  \begin{center}
    \begin{tabular}{c|ccccccc|}
      &$\Delta_a$&$\Delta_b$&$E_f^{(5)}$&$E_f^{(6)}$&$E_f^{(7)}$&$E_f^{(8)}$&$E_f^{(9)}$\\
      \cline{1-8}   
       $\mathcal{NA}_{6}^{*[4]}$&1.000&1.267& 0.400& 0.821& 0.704& 1.082& 1.012\\
      $\boldsymbol{\mathcal{NB}_{5}^{*[4]}}$&\textbf{1.000}&\textbf{1.141}& \textbf{0.352}& \textbf{0.698}& \textbf{0.559}& \textbf{0.913}& \textbf{0.789}\\
      $\boldsymbol{\mathcal{NB}_{6}^{*[4]}}$&\textbf{1.000}&\textbf{1.416}& \textbf{0.322}& \textbf{0.766}& \textbf{0.666}& \textbf{1.025}& \textbf{0.866}\\
      $\mathcal{NA}_{7}^{*[5]}$&1.000&1.662& --& 0.695& 0.817& 1.013& 1.132\\
      $\mathcal{NA}_{8}^{*[5]}$&1.000&1.393&-- &0.546& 0.947& 0.953& 1.339\\
      $\mathcal{NA}_{9}^{*[5]}$&1.000&1.456&-- &0.498& 0.970& 1.157& 1.357\\
      $\mathcal{NB}_{7}^{*[5]}$&1.000&3.196&-- &0.833& 0.970& 1.143& 1.300\\
      $\boldsymbol{\mathcal{NB}_{8}^{*[5]}}$&\textbf{1.000}&\textbf{1.482}&-- &\textbf{0.478}& \textbf{0.670}& \textbf{1.046}& \textbf{1.031}\\
      $\boldsymbol{\mathcal{NB}_{9}^{*[5]}}$&\textbf{1.000}&\textbf{1.618}&-- &\textbf{0.403}& \textbf{0.966}& \textbf{1.331}& \textbf{1.499}\\       
      $\mathcal{NA}_{10}^{*[6]}$&1.000&1.528&-- &--& 0.906& 1.204& 1.298\\
       $\boldsymbol{\mathcal{NA}_{11}^{*[6]}}$&\textbf{1.000}&\textbf{2.092}&--&--&\textbf{0.656}&\textbf{1.418}&\textbf{1.643}\\      
      $\mathcal{NB}_{10}^{*[6]}$&1.000&1.516&-- &-- &1.000&1.212&1.557\\
      $\boldsymbol{\mathcal{NB}_{11}^{*[6]}}$&\textbf{1.000}&\textbf{1.595}&--&--&\textbf{0.646}&\textbf{1.387}&\textbf{1.394}\\      
      \cline{1-8}
      $\mathcal{GB}_{5}^{*[4]}$&1.000&1.133& 0.477 &0.662& 0.662& 0.885& 0.807\\
      $\mathcal{GB}_{9}^{*[5]}$&1.000&1.463& --&0.603& 0.786& 1.036& 1.278\\
      $\mathcal{GB}_{15}^{*[6]}$&1.000&1.692&-- &--&1.515&1.434&2.169 \\     
      \cline{1-8} 
      $\mathcal{NB}_{6}^{[4]}$&2.401&1.156& 0.291& --& 0.809& --& 1.307\\
      $\mathcal{NB}_{11}^{[6]}$&2.494&1.206&--& --& 0.784& --& 1.664\\
      $\mathcal{NA}_{14}^{[6]}$&1.659&2.012& --& --& 0.627& --& 2.238\\
      \cline{1-8}
    \end{tabular}
    \caption{1-norm and effective errors for several splitting methods of order 4, 5 and 6 designed for problems satisfying the condition (\ref{rkn}).}
    \label{tau:efrknsc}
  \end{center}
  }
\end{table}

\begin{table}[!h]
{\small
  \renewcommand\arraystretch{1.4}
  \begin{center}
    \begin{tabular}{lll|}
      &\multicolumn{1}{c}{$a_i$}&\multicolumn{1}{c}{$b_i$} \\ 
      \cline{2-3} 
      $\mathcal{NB}_{5}^{*[4]}$&$a_0=0.17354158169943656$ &$b_0=0.06421454120274125 + 0.0245540186592381\, i$ \\
      &$a_1=                       0.19379086394173623$ &$b_1=0.20166370500451958 - 0.0982277975564409 \, i$ \\
      &$a_2=1-2\sum_{i=0}^1a_i$ &$b_2=\frac{1}{2}-\sum_{i=0}^1\Re(b_i) +             0.1491719824749133 \, i$ \\
      \cline{2-3}
      \cline{2-3}
      $\mathcal{NB}_{6}^{*[4]}$&$a_0=\frac{1}{5}$ & $b_0=\frac{7}{100} + 0.019444288930263294 \, i$ \\
      &$a_1=0.054855282174763084$ &$b_1=0.16 -0.20579973912385285 \, i$ \\
      &$a_2=\frac{1}{2}-\sum_{i=0}^1a_i$ &$b_2=0.16251793145097668 + 0.21219211957584155 \, i$ \\
      & &$b_3=1-2\sum_{i=0}^1\Re(b_i)$ \\
      \cline{2-3}  
      \cline{2-3}
      $\mathcal{NB}_{8}^{*[5]}$&$a_0=0.13556579817637690$ & $b_0=0.048 -0.0045117121645322032 \, i$ \\
      &$a_1=0.12110548685533656$ &$b_1=0.159 + 0.039915395925895825 \, i$ \\
      &$a_2=0.040926280383255811$ &$b_2=0.08808186616153123 -0.19475521098317861 \, i$ \\
      &$a_3=\frac{1}{2}-\sum_{i=0}^2\Re(a_i)$ &$b_3=0.08139005735125036 + 0.17341123352295854 \, i$\\
      &                                     &$b_4=1-2\sum_{i=0}^3b_i$\\ 
      \cline{2-3}
      \cline{2-3}
      $\mathcal{NB}_{9}^{*[5]}$&$a_0=0.066$ &$b_0=0.03 -0.026088775868557137 \, i$ \\
      &$a_1=0.066$ &$b_1=0.065 + 0.0871906864166141 \, i$ \\
      &$a_2=0.15406042184345631$ &$b_2=0.087791471011534450 -0.07869869176637824 \, i$ \\
      &$a_3=0.20434260458660722$ &$b_3=0.21903826707051549 + 0.005649631789653575 \, i$\\
      &$a_4=1-2\sum_{i=0}^3a_i$ &$b_4=\frac{1}{2}-\sum_{i=0}^3\Re(b_i) + 0.3080209334852549 \, i$\\ 
      \cline{2-3}      
      \cline{2-3}
      $\mathcal{NA}_{11}^{*[6]}$&$a_0=0.062770091$ &$b_0=0.10891717046144 -0.16165289456182 \, i$ \\
      &$a_1=0.011912916558090$ &$b_1=0.05673774365156 + 0.19084324113721 \, i$ \\
      &$a_2=0.20435669618321$ &$b_2 =0.00000000664446 -0.2132590752834 \, i$ \\
      &$a_3=0.019233264988143$ &$b_3=0.2404799796837 + 0.10112304441789 \, i$ \\
      &$a_4=0.06593857714457$ &$b_4=0.04313692053520 + 0.11954730647763 \, i$ \\
      &$a_5=\frac{1}{2}-\sum_{i=0}^4a_i$ &$b_5=1-2\sum_{i=0}^4\Re(b_i)$\\ 
      \cline{2-3}        
      \cline{2-3}
      $\mathcal{NB}_{11}^{*[6]}$&$a_0=\frac{213}{2500}$ &$b_0=\frac{7}{250} -0.009532915454170 \, i$ \\
      &$a_1=0.047358568390005$ &$b_1=0.08562523731685 + 0.0718344013568 \, i$ \\
      &$a_2=0.1553620075936$ &$b_2=0.09331583397900 -0.09161071812994 \, i$ \\
      &$a_3=0.10012117440925$ &$b_3=0.11799012127542 + 0.0702739287203 \, i$ \\
      &$a_4=0.10547836949919$ &$b_4=0.16176918420712 -0.04327349898459 \, i$ \\
      &$a_5=1-2\sum_{i=0}^4a_i$ &$\Re(b_5)=\frac{1}{2}-\sum_{i=0}^4 \Re(b_i) -0.2203293328195 \, i$\\
      \cline{2-3} 
    \end{tabular}
    \caption{Coefficients of the most efficient symmetric-conjugate RKN splitting methods of order 4, 5 and 6.}
    \label{coefRKN}
  \end{center}
  }
\end{table}

In the Appendix we provide analogous information for general schemes of orders 3, 4, 5 and 6, i.e., of splitting methods
for general problems of the form
$H = A + B$, with $a_j > 0$ and
$b_j \in \mathbb{C}$ with $\Re(b_j) > 0$. They typically involve more stages, but can be applied in more general contexts. 

{One should take into account, however, that all these symmetric-conjugate methods have been obtained by considering the ordinary differential
equation (\ref{problem1}) in finite dimension, whereas the time dependent Schr\"odinger equation is a prototypical example of an evolutionary
PDE involving unbounded
operators (the Laplacian and possibly the potential). In consequence, one might arguably question the viability of using the above schemes in this setting. That
this is indeed possible comes as a consequence of some previous results obtained in the context of PDEs defined in analytic semigroups.

Specifically, equation (\ref{Schr0}) can be written in the generic form 
\begin{equation} \label{gen.1}
  u' = \hat{L} u = (\hat{A}+\hat{B}) u, \qquad u(0) = u_0,
\end{equation}
with $\hat{A} = \frac{i}{2} \Delta$ and 
$\hat{B} = -i \hat{V}$. It has been
shown in \cite{hansen09esf} (see also \cite{jahnke00ebf,thalhammer12cao}) that, under the two assumptions stated below, a splitting method of the form
\begin{equation} \label{sp.ge}
  S_h = \e^{h a_0 \hat{A}} \, \e^{ h b_0 \hat{B}} \, \cdots  \, \e^{ h b_{2n-1} \hat{B}} \, \e^{ h a_{2n} \hat{A}} 
\end{equation}
is of order $p$ for problem (\ref{gen.1}) if and only if it is of classical order $p$ in the finite dimensional case. The assumptions are as follows:
\begin{enumerate}
\item {\em Semi-group property}: $\hat{A}$, $\hat{B}$ and $\hat{L}$ generate $C^0$-semigroups on a Banach space 
$X$ with norm $\| \cdot \|$ and, in addition, they
satisfy the bounds 
\begin{eqnarray*}
\|\e^{t \hat{A}} \| \leq \e^{\omega t},  \qquad \|\e^{t \hat{B}} \| \le \e^{\omega t} 
\end{eqnarray*}
for some positive constant $\omega$ and all $t \ge 0$.
\item {\em Smoothness property}: For any pair of multi-indices $(i_1,\ldots,i_m)$ and $(j_1,\ldots,j_m)$ with $i_1+ \cdots + i_m + j_1 + \cdots + j_m = p+1$, and for all $t \in [0,T]$,
\begin{eqnarray*}
\|\hat{A}^{i_1} \hat{B}^{j_1} \ldots \hat{A}^{i_m} \hat{B}^{j_m} \, \e^{t \hat{L}}u_0\| \leq C
\end{eqnarray*}
for a positive constant $C$.
\end{enumerate}
These conditions restrict the coefficients $a_j$, $b_j$ in (\ref{sp.ge}) to be positive, however, and thus the method to be of second order at most. 
Nevertheless, it has been shown in \cite{hansen09hos,castella09smw} that,
if in addition $\hat{L}$, $\hat{A}$ and $\hat{B}$ generate analytic semigroups on $X$ defined in the sector $\Sigma_{\phi} = \{ z \in \mathbb{C} :
|\arg z| < \phi \}$, for a given angle $\phi \in (0, \pi/2]$ and the operators $\hat{A}$ and $\hat{B}$ verify
\[
  \|\e^{z \hat{A}} \| \leq \e^{\omega |z|},  \qquad \|\e^{z \hat{B}} \| \le \e^{\omega |z|} 
\]
for some $\omega \ge 0$ and all $z \in \Sigma_{\phi}$, then a splitting method of the form (\ref{sp.ge}) of classical order $p$ with all its coefficients
$a_j$, $b_j$ in the sector $\Sigma_{\phi} \subset \mathbb{C}$, then
\[
  \| (S_h^n - \e^{n h \hat{L}}) u_0 \| \le C h^p, \qquad 0 \le n h \le T
\]
where $C$ is a constant independent of $n$ and $h$.

}
 
\section{Numerical illustration: Modified P\"oschl--Teller potential}
\label{section5}

The so-called modified P\"oschl--Teller potential takes the form
\begin{equation} \label{pt1}
  V(x) = -\frac{\alpha^2}{2} \frac{\lambda (\lambda-1)}{\cosh^2 \alpha x},
\end{equation}
with $\lambda > 1$, and admits an analytic treatment to compute explicitly the eigenvalues for negative energies \cite{flugge71pqm}. For the simulations
we take $\alpha = 1$, $\lambda(\lambda-1) = 10$ and the initial condition $\psi_0(x) = \sigma \, \e^{-x^2/2}$, with $\sigma$ a normalizing constant.
We discretize the interval $x \in [-8,8]$ with $N=256$ equispaced points and apply Fourier spectral methods. With this value of $N$ it turns out that
$\| ([B,[B,[A,B]]]) u_0\|$ is sufficiently close to zero to be negligible, so that we can safely apply the schemes of Table \ref{coefRKN}. If $N$ is not sufficiently
large, then the corresponding matrices $A$ and $B$ do not satisfy (\ref{rkn}), and as a consequence, the schemes are only of order three. This can be
indeed observed in practice.

We first check how the errors in the norm $\mathcal{M}(u)$ and in the energy $\mathcal{H}(u)$ evolve with time according with each type of integrator. To
this end we integrate numerically until the final time $t_f = 10^4$ with three 6th-order compositions involving complex coefficients: (i) the new
symmetric-conjugate scheme $\mathcal{NB}_{11}^{*[6]}$ collected in Table \ref{coefRKN} $(h = 100/909 \approx 0.11)$, 
(ii) the palindromic scheme denoted by $\mathcal{B}_{16}^{[6]}$
with all $a_j$ taking the same value $a_j = 1/16$, $j=1,\ldots, 8$ and complex $b_j$ with positive real part\footnote{The coefficients can be found at the website
\url{http://www.gicas.uji.es/Research/splitting-complex.html}.} $(h=0.16)$, and (iii) the method obtained by composing $\mathcal{B}_{16}^{[6]}$ with its complex
conjugate $(\mathcal{B}_{16}^{[6]})^*$, resulting in a symmetric-conjugate integrator $(h=0.32)$.  
The step size is chosen in such a way that all the 
methods require the same number of FFTs. The results  are depicted in Figure \ref{figure-error1}. We see that, according
with the previous analysis, {the error in both unitarity and energy furnished by the new scheme $\mathcal{NB}_{11}^{*[6]}$ does not grow with time}, 
in contrast with
palindromic compositions involving complex coefficients. Notice also that the composition of the palindromic scheme $\mathcal{B}_{16}^{[6]}$ with its
complex conjugate leads to a new (symmetric-conjugate) integrator with good preservation properties. On the other hand, composing a symmetric-conjugate
method with its complex conjugate results in a palindromic scheme showing a drift in the error of both the norm and the energy \cite{blanes22asm}.

\begin{figure}[!ht] 
\centering
  \includegraphics[width=.49\textwidth]{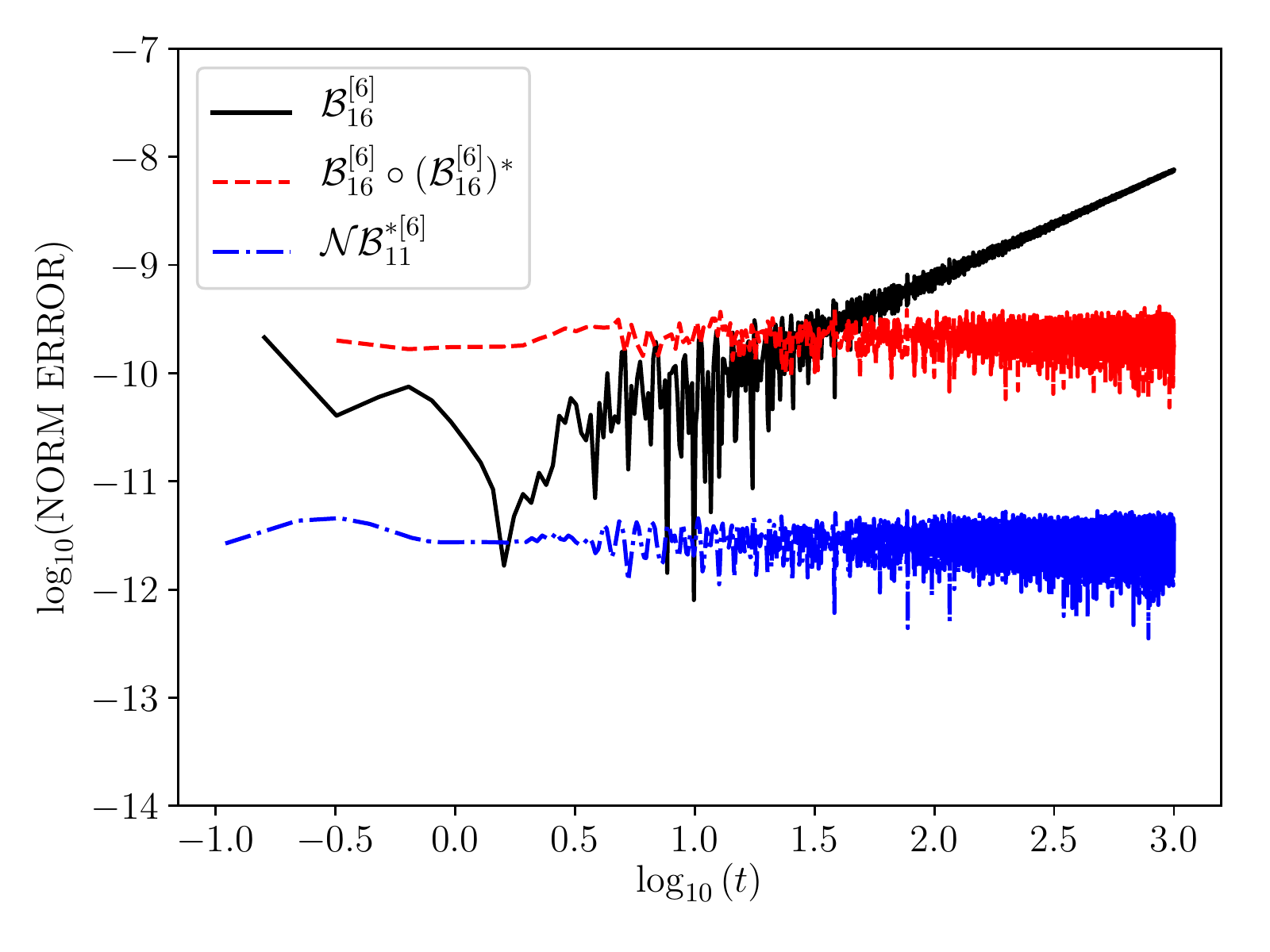}
  \includegraphics[width=.49\textwidth]{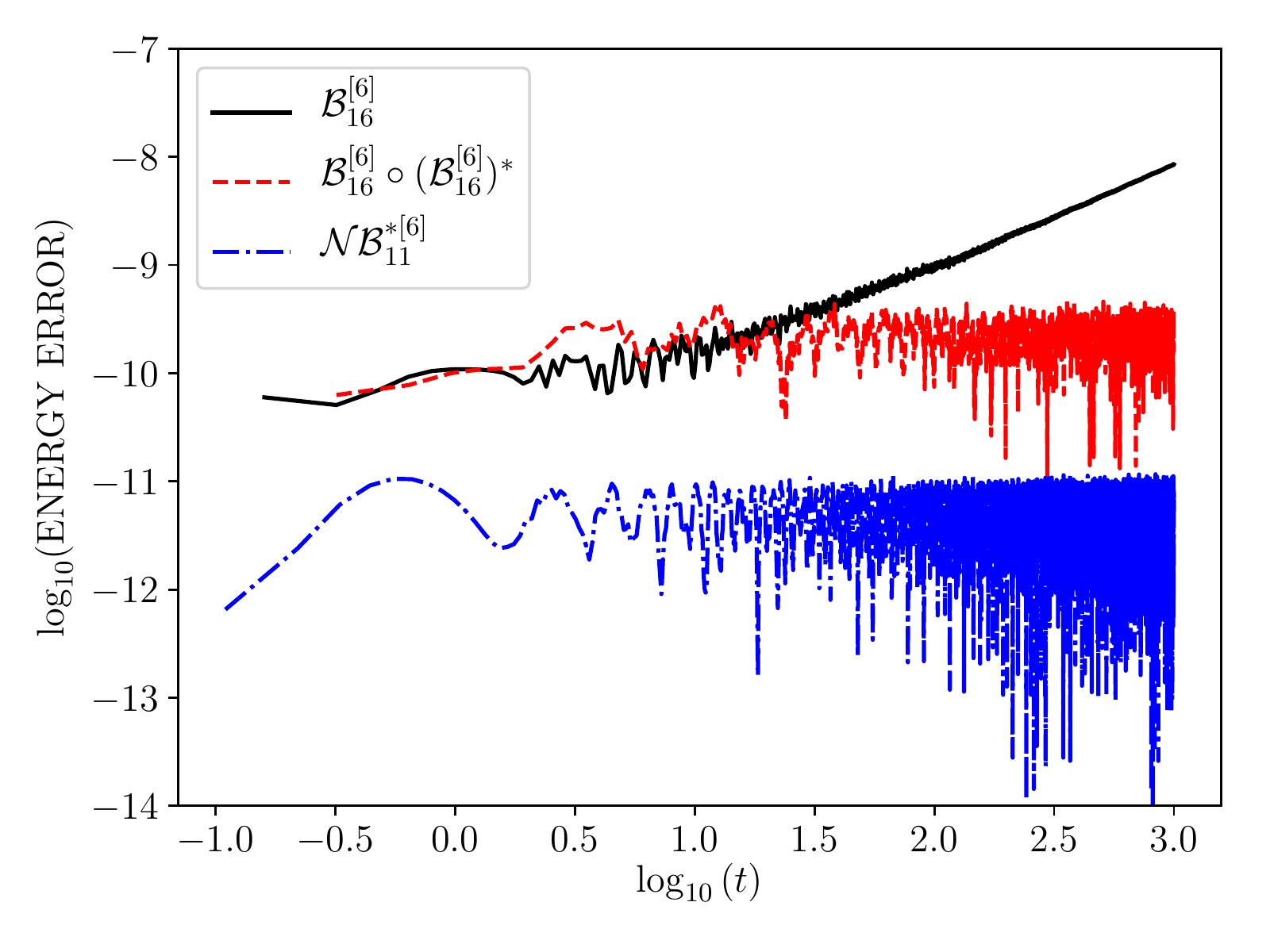}
 \caption{\label{figure-error1} \small Error in norm $\mathcal{M}(u)$ (left) and in energy $\mathcal{H}(u)$ (right) as a function of time for complex-conjugate
 and palindromic methods involving complex coefficients.
}
\end{figure}

In our second experiment, we test the efficiency of the different schemes. To this end we integrate until the final time $t_f = 100$, compute 
the expectation value of the energy,$\mathcal{H}(u_{\mathrm{app}}(t))$, and measure the error as the maximum of the 
difference with respect to the exact value along the integration:
\begin{equation} \label{eq.5.1b}
    \max_{0 \le t \le t_f} \quad |\mathcal{H}(u_{\mathrm{app}}(t))  - \mathcal{H}(u_0)|.
\end{equation}   
The corresponding results are displayed as a function of the computational
cost measured by the number of FFTs necessary to carry out the calculations (in log-log plots) in  Figure \ref{figure4.1}. Notice how the new 
symmetric-conjugate schemes offer a better efficiency than standard splitting methods for this problem. The improvement is particularly significant in the
6th-order case.

\begin{figure}[!ht] 
\centering
  \includegraphics[width=.49\textwidth]{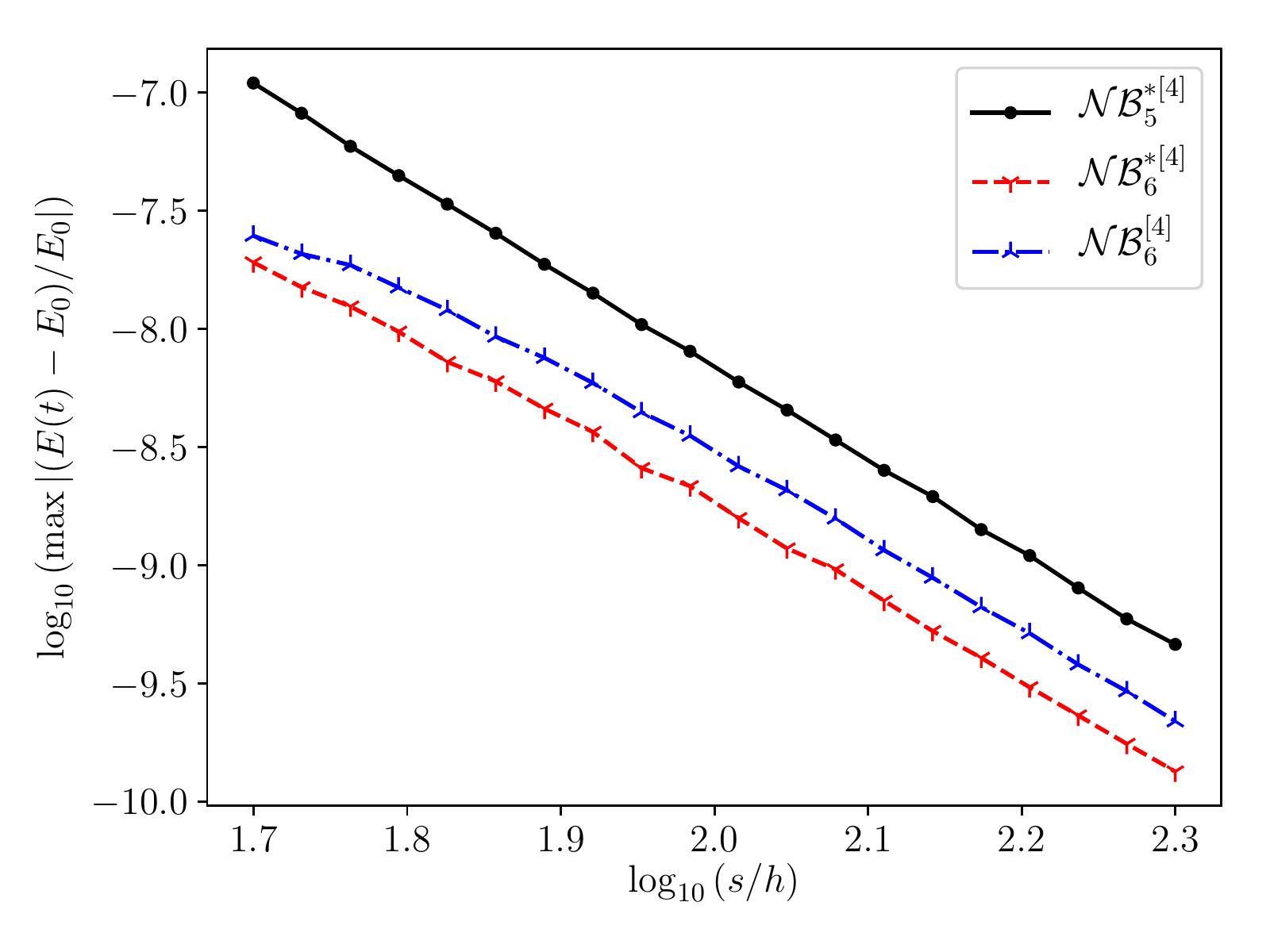}
  \includegraphics[width=.49\textwidth]{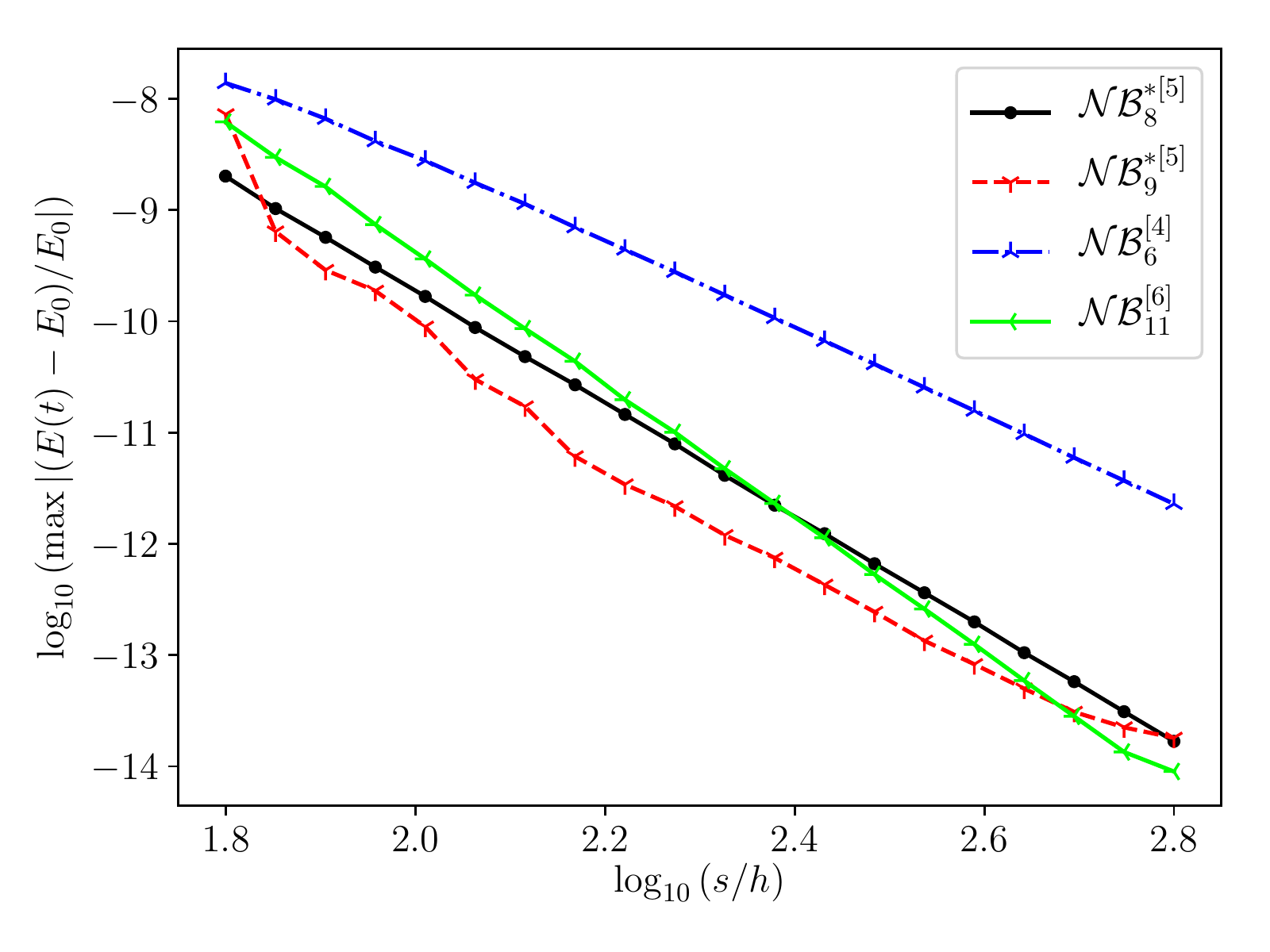}
  \includegraphics[width=.49\textwidth]{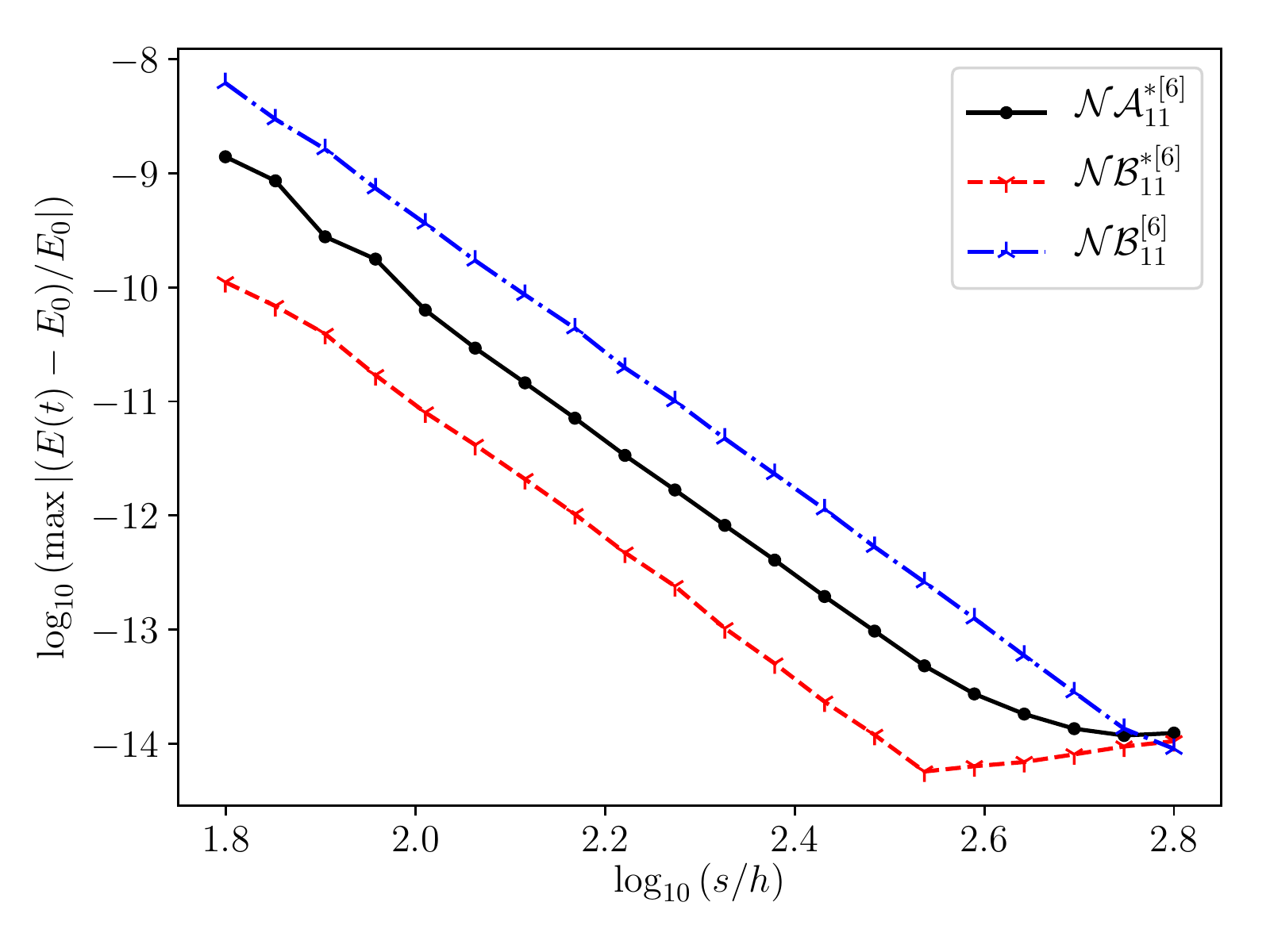}
\caption{\label{figure4.1} \small Maximum error in the expectation value of the energy along the integration for several 4th-, 5th- and 6th-order symmetric-conjugate
splitting methods for the modified P\"oschl--Teller potential.
}
\end{figure}

\subsection*{Acknowledgements}
The work of JB is supported by ANR-22-CE40-0016 ``KEN" of the Agence Nationale de la Recherche (France) and by the region Pays de la 
Loire (France) through the project ``MasCan". SB, FC and AE-T acknowledge financial support by 
Ministerio de Ciencia e Innovaci\'on (Spain) through project PID2019-104927GB-C21, MCIN/AEI/10.13039/501100011033, ERDF (``A way of ma\-king Europe"). 
The authors would also like to thank Prof. C. Lubich for his very useful remarks. 

\subsection*{Compliance with Ethical Standards}

All authors declare that they have no conflicts of interest.

\appendix

\section{Appendix}
  
 We collect in this Appendix the most efficient symmetric-conjugate splitting methods with  $a_j > 0$ and
$b_j \in \mathbb{C}$ with $\Re(b_j) > 0$ we have found for a general problem of the form $H = A + B$. The coefficients of the schemes in boldface
in Table \ref{general} are listed in Table \ref{coef-general}. Methods of type (\ref{aba}) of order $p$ involving $s$ stages are denoted as 
$\mathcal{A}_{s}^{*[p]}$, whereas $\mathcal{B}_{s}^{*[p]}$  refers to a similar scheme of type (\ref{bab}). 
As in Table \ref{tau:efrknsc}, we also collect for reference the methods proposed in \cite{goth20hoa} (denoted by $\mathcal{GB}_{s}^{*[p]}$) and two
efficient palindromic compositions of time-symmetric schemes of order 2 with real coefficients, $\mathcal{S}_s^{[p]}$. At order 5, the most efficient
scheme turns out to be $\mathcal{GB}_{9}^{*[5]}$.

We also include a numerical illustration on the modified P\"oschl--Teller potential with the same data as before. 
Notice in particular the improvement with respect to the 6th-order
scheme $\mathcal{S}_{10}^{[6]}$.

\begin{table}[!ht]
{\small
  \renewcommand\arraystretch{1.4}
  \begin{center}
    \begin{tabular}{l|llllllll|}
      &$\Delta_a$&$\Delta_b$&$E_f^{(4)}$&$E_f^{(5)}$&$E_f^{(6)}$&$E_f^{(7)}$&$E_f^{(8)}$&$E_f^{(9)}$\\
      \cline{2-9}
      $\boldsymbol{\mathcal{B}_{3}^{*[3]}}$&\textbf{1.000}&\textbf{1.766}& \textbf{0.522}& \textbf{0.509} &\textbf{0.682}& \textbf{0.664}& \textbf{0.812}& \textbf{0.875}\\
      $\mathcal{A}_{6}^{*[4]}$&1.000&1.125& --& 0.410 &0.827& 0.608& 1.090& 0.988\\
      $\boldsymbol{\mathcal{B}_{5}^{*[4]}}$&\textbf{1.000}&\textbf{1.146}& --& \textbf{0.399} &\textbf{0.764}& \textbf{0.569}& \textbf{0.972}& \textbf{0.772}\\
      $\mathcal{B}_{6}^{*[4]}$&1.000&1.136& --& 0.445 &0.911& 0.626& 1.158& 0.881\\
      $\mathcal{A}_{9}^{*[5]}$&1.000&1.704& --& --&1.141& 1.173& 1.521& 1.744\\
      $\mathcal{B}_{9}^{*[5]}$&1.000&1.480&-- &-- &0.885& 0.826& 1.198& 1.493\\      
      $\mathcal{A}_{15}^{*[6]}$&1.000&1.355&-- &-- &--&1.544&1.335&2.348 \\
      $\boldsymbol{\mathcal{B}_{15}^{*[6]}}$&\textbf{1.000}&\textbf{1.327}&-- &-- &-- &\textbf{1.150}&\textbf{1.274}&\textbf{2.116}\\
      \cline{1-9}
      $\mathcal{GB}_{3}^{*[3]}$&1.000&1.155& 0.586& 0.445& 0.722& 0.642& 0.777& 0.772 \\
      $\mathcal{GB}_{5}^{*[4]}$&1.000&1.133& --& 0.480 &0.698& 0.676& 0.918& 0.830\\
      $\boldsymbol{\mathcal{GB}_{9}^{*[5]}}$&\textbf{1.000}&\textbf{1.463}& --& --&\textbf{0.681}& \textbf{0.819}& \textbf{1.126}& \textbf{1.439}\\
      $\mathcal{GB}_{15}^{*[6]}$&1.000&1.692&-- &-- &--&1.583&1.445&2.361 \\
      \cline{1-9}
      $\mathcal{S}_{6}^{[4]}(aba)$&1.168&1.575& --&0.559& --& 0.792& --& 1.239\\
      $\mathcal{S}_{10}^{[6]}(aba)$&3.203&1.595& --& --& --&1.144& --& 1.606\\
      \cline{1-9}
    \end{tabular}
    \caption{1-norm and effective errors for symmetric-conjugate splitting methods for $H = A + B$.}
    \label{general}
  \end{center}
  }
\end{table}

\begin{table}[!h]
{\small
  \renewcommand\arraystretch{1.4}
  \begin{center}
    \begin{tabular}{lll|}
      &\multicolumn{1}{c}{$a_i$}&\multicolumn{1}{c}{$b_i$} \\ 
      \cline{2-3}
      $\mathcal{B}_{3}^{*[3]}$&$a_0=0.4706$ &$b_0=0.1655101882118 + 0.03704896872215 \, i$ \\
      &$a_1=1-2a_0$ &$\Re(b_1)=\frac{1}{2}-\Re(b_0) - 0.6300845020773 \, i$\\ 
      \cline{2-3}
      \cline{2-3}
      $\mathcal{B}_{5}^{*[4]}$&$a_0=\frac{37}{250}$ &$b_0=0.05338438633498185 -0.03218942894140047 \, i$ \\
                                                       &$a_1=0.22446218092466344$ &$b_1=0.19561815336463223 + 0.0992879758243923 \, i$ \\
      &$a_2=1-2\sum_{i=0}^1a_i$ &$b_2=\frac{1}{2}-\sum_{i=0}^1\Re(b_i) -0.14783578044680548 \, i$ \\
      \cline{2-3}
      \cline{2-3}    
      $\mathcal{B}_{15}^{*[6]}$&$a_0=0.08092666015955027$ &$b_0=\frac{3}{100} -0.0028985018717006387 \, i$ \\
      &$a_1=0.06736427978832901$ &$b_1=0.08826477458499815 + 0.019065371639195743 \, i$ \\
      &$a_2=0.057276240999706116$ &$b_2=0.07026507350715319 -0.05226928459003309 \, i$ \\
      &$a_3=0.06428730473896961$ &$b_3=0.051044248093469226 + 0.07580262639617709 \, i$ \\
      &$a_4=0.05528732144478408$ &$b_4=0.040506044227148555 -0.07981221177569087 \, i$ \\
      &$a_5=0.02566179136566552$ &$b_5=0.03061653536468681 + 0.07254698089135206 \, i$ \\
      &$a_6=0.10559039215618958$ &$b_6=0.10349890449629792 -0.03539199012223482 \, i$ \\     
      &$a_7=1-2\sum_{i=0}^6a_i$ &$b_7=\frac{1}{2}-\sum_{i=0}^6\Re(b_i) + 0.0111821298374971054 \, i$ \\
      \cline{2-3}
    \end{tabular}
    \caption{Coefficients of the most efficient splitting methods collected in Table \ref{general}.}
    \label{coef-general}
  \end{center}
}  
\end{table}

\begin{figure}[!ht] 
\centering
  \includegraphics[width=.6\textwidth]{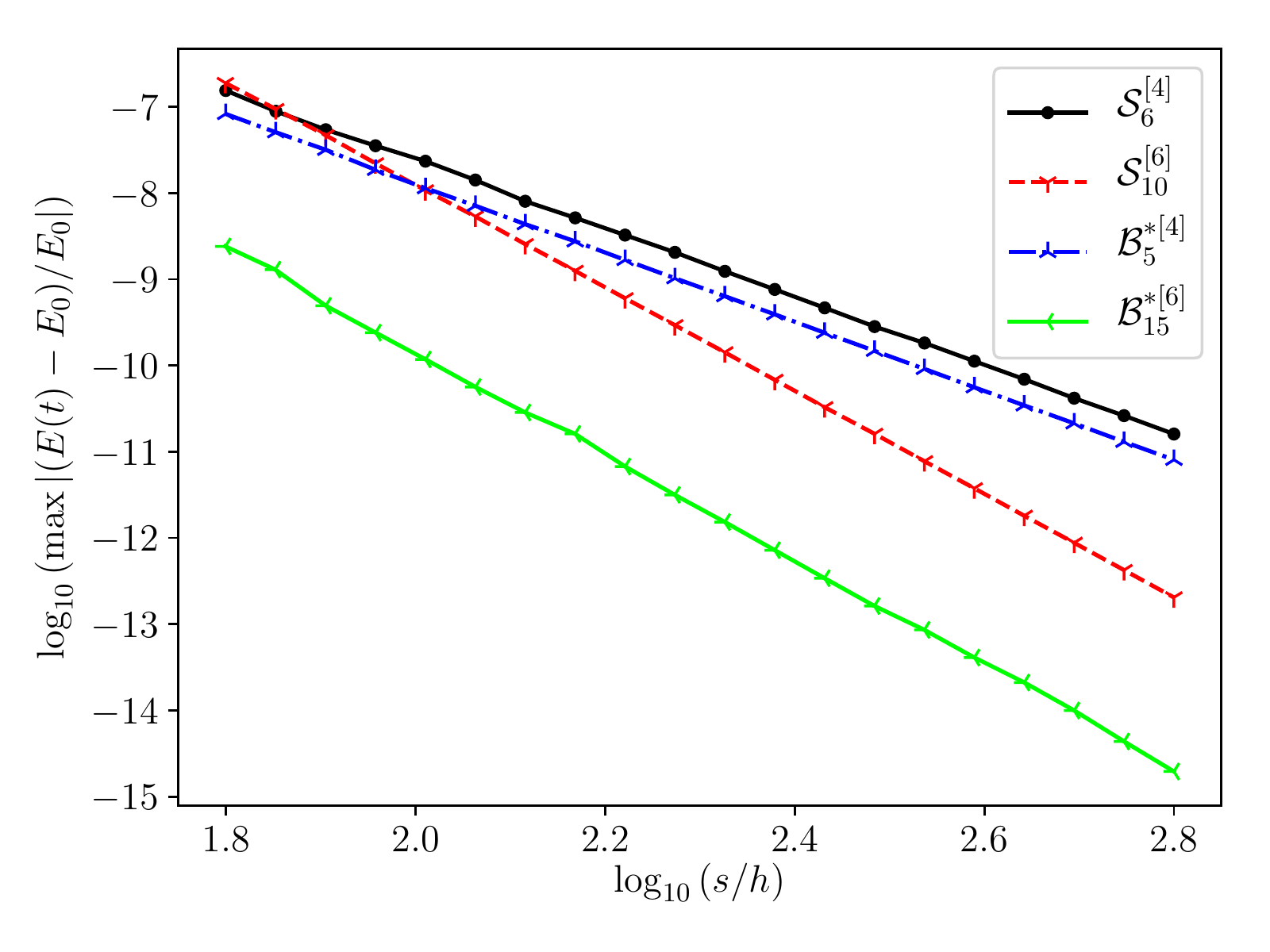}
\caption{\label{figure-app.1} \small Maximum error in the expectation value of the energy along the integration as a function of the computational cost for the new symmetric-conjugate
splitting methods intended for general problems of the form $H = A + B$ (modified P\"oschl--Teller potential).
}
\end{figure}

\bibliographystyle{siam}

\end{document}